\documentclass[10pt,twoside]{amsart}
\usepackage{pstricks}
\usepackage[bookmarks=true]{hyperref}

% THEOREMS
\newtheorem{thm}{Theorem}[section]

\newtheorem{cor}[thm]{Corollary}
\newtheorem{lem}[thm]{Lemma}
\theoremstyle{definition}
\newtheorem{defn}[thm]{Definition}

\newtheorem{rem}[thm]{Remark}

% INTERLINEA TABULAR

% NEW COMMANDS

\newcommand{\hgt}{{\rm ht}\,}
\newcommand{\bight}{{\rm bight}\,}
\newcommand{\ara}{{\rm ara}\,}
\newcommand{\pd}{{\rm pd}\,}

\title[]{The arithmetical rank of the edge ideals \\ of graphs with pairwise disjoint cycles}

\author{Margherita Barile}
\address[M. Barile]{Dipartimento di Matematica, Universit\`a degli Studi di Bari ``Aldo Moro'', Via Orabona 4, 70125 Bari, Italy}
\email{margherita.barile@uniba.it}

\author{Antonio Macchia}
\address[A. Macchia]{Fachbereich Mathematik und Informatik, Philipps-Universit\"at Marburg, Hans-Meerwein-Strasse 6, 35032 Marburg, Germany}
\email{macchia.antonello@gmail.com}

\begin{document}

\begin{abstract}
We prove that, for the edge ideal of a graph whose cycles are pairwise vertex-disjoint, the arithmetical rank is bounded above by the sum of the number of cycles and the maximum height of its associated primes.
\end{abstract}

%%%% ----------------------------------------------------------------------
\maketitle
%%%% ----------------------------------------------------------------------

\noindent {\bf Mathematics Subject Classification (2010):} 13A15, 13F55, 05C38.

\noindent {\bf Keywords:} Arithmetical rank, edge ideals, cycles.

\section{Introduction}

Let $R$ be a polynomial ring over a field. Any ideal $I$ of $R$ generated by squarefree quadratic monomials can be viewed as the so-called \textit{edge ideal} $I(G)$ of a graph $G$ whose vertex set $V(G)$ is the set of indeterminates, and whose edges are the sets formed by two indeterminates $x,y$ such that $xy$ is a generator of $I$. This notion was introduced in 1990 by Villarreal \cite{V90} and extensively studied in 1994 by Simis, Vasconcelos and Villarreal \cite{SVV}; see \cite{MV11} for a detailed survey. The present paper is concerned with two algebraic invariants of $I(G)$: the \textit{big height}, denoted by $\bight I(G)$, which is the maximum height of the minimal primes of $I(G)$, and the \textit{arithmetical rank}, denoted by $\ara I(G)$, which is the minimum number of elements of $R$ that generate an ideal whose radical is $I(G)$. It is well known that
\begin{equation}\label{0}
\hgt I(G)\leq\bight I(G)\leq \pd R/ I(G)\leq \ara I(G),
\end{equation}
where $\hgt$ and $\pd$ denote the height and the projective dimension, respectively. A very special case is the one where equality holds everywhere: then the ideal $I(G)$ is a set-theoretic complete intersection. According to some recent results, this occurs for many Cohen-Macaulay edge ideals fulfilling additional conditions like,  e.g., having height two \cite{K} or having height equal to half the number of vertices \cite{BM}. These include the bipartite graphs studied in \cite{HH05} and in \cite{EV97}.    A more general case is the one where the arithmetical rank is equal to the projective dimension of the quotient ring. This equality has been proven for several classes of graphs, such as the graphs formed  by one cycle or by two cycles having one vertex in common \cite{BKMY12} or connected through an edge \cite{M},  those formed by some cycles and lines having a common vertex \cite{KM12}, or those whose edge ideals are subject to certain algebraic constraints (see, e.g., \cite{EOT10} and \cite{KRT}).
A stronger condition is the equality between the arithmetical rank and the big height, which has been established for certain unmixed bipartite graphs \cite{Ku09}, for acyclic graphs \cite{KT13}, for graphs formed by a single cycle and some terminal edges attached to some of its vertices (\textit{whisker graphs on a cycle}) (see \cite{M13a} or \cite{M13c}), and  for graphs in which every vertex  belongs to a terminal edge (\textit{fully whiskered graphs}) (see \cite{M13b} or \cite{M13c}). A question that naturally arises when comparing the  arithmetical rank and the big height, is whether their difference can be bounded above by means of some graph-theoretical invariants. We will show that, for every graph whose cycles are pairwise disjoint, an upper bound is provided by the number of cycles. This is a generalization of the result in \cite{KT13}, but is proven independently, and by completely different techniques. The approach is inductive on the number of edges, and the basis of induction is the case of fully whiskered graphs, for which the claim was proven by the second author using the homological method, based on Lyubeznik resolutions, introduced by Kimura in \cite{Ki09}. All the results proven in this paper hold on any field.

\section{Preliminaries}

We first introduce some graph-theoretical terminology and notation.\\
All graphs considered in this paper are simple, i.e., without multiple edges or loops. Given two vertices $x$ and $y$ of a graph $G$, we will say that $x$ is a \textit{neighbour} of $y$ if the vertices $x,y$ form an edge. By abuse of notation, this edge will be denoted by $xy$, with the same symbol used for the corresponding monomial of $I(G)$. The vertex $x$ will be called \textit{terminal} or a \textit{leaf}, if it has exactly one neighbour $y$; in this case the edge  $xy$ will be called \textit{terminal}. For the remaining basic terminology about graphs we refer to \cite{Ha}.\\
A graph will always be identified with the set of its edges. If $G=\emptyset$, then we will set $I(G)=(0)$.

\begin{defn}
A (non-empty) graph is called a \textit{star} if all its edges have one vertex in common.
\end{defn}

\begin{defn} Let $G$ be a graph. A subset $C$ of its vertex set is called a \textit{vertex cover} if all edges of $G$ have a vertex in $C$. A vertex cover of $G$ is called \textit{minimal} if it does not properly contain any vertex cover of $G$. A minimal vertex cover of $G$ is called \textit{maximum} if it has maximum cardinality among the minimal vertex covers of $G$.
\end{defn}

\begin{rem}
The unique (hence, the maximum) minimal vertex cover of an empty graph is the empty set.
\end{rem}

\noindent It is well known that the minimal vertex covers of $G$ are the sets of generators of the minimal primes of $I(G)$. Hence $\bight I(G)$ is the cardinality of the maximum minimal vertex covers of $G$.

\begin{defn}
Let $G$ be a graph, and $H$ a subgraph of $G$.
\begin{list}{}{}
\item[$(i)$] If $V(H)=V(G)$, we will say that $G$ is \textit{spanned} by $H$.
\item[$(ii)$] If $C$ is a minimal vertex cover of $G$, we will say that the (possibly empty) set  $C\cap V(H)$ is the vertex cover \textit{induced} by $C$ on $H$.
\end{list}
\end{defn}

\begin{defn} Let $G$ be a graph, and $H_1$ and $H_2$ be two subgraphs of $G$.  If $H_1$ and $H_2$ are vertex-disjoint (i.e. have disjoint vertex sets) and there are a vertex $x_1$ of $H_1$ and a vertex $x_2$ of $H_2$ such that $G = H_1\cup\{x_1x_2\}\cup H_2$, we will say that $H_1$ and $H_2$ are \textit{connected through the edge} $x_1x_2$.
\end{defn}

Given a graph $G$, and a vertex $x$ of $G$, adding a \textit{whisker} to $G$ at $x$ means adding a new vertex $y$ to the vertex set of $G$ and the edge $xy$ to its edge set. The new edge will be referred to as a whisker attached to $x$. A graph obtained from $G$ by adding one or more whiskers to some of its vertices will be called a \textit{whisker graph} on $G$. If there is at least one whisker attached to every vertex of $G$, $G$ will be called \textit{fully whiskered}. These graphs were considered by the second author in \cite{M13b}, were it was proven that the arithmetical rank of their edge ideals is always equal to its big height.
\\[3mm]
\indent In the sequel we will adopt the following notation about a graph $G$. If we add a whisker to $G$, we will denote the new graph by $G'$. If we remove one, two or possibly more edges from $G$, we will denote the resulting graph by $\overline{G}$, $\widetilde{G}$, or $\widehat{G}$, respectively. Moreover, $G^\bullet$ and $G^\vee$ will be alternative notations for $\overline{G}$ and $\widetilde{G}$, respectively. We will often use the corresponding notation for the vertex covers and the subgraphs of these graphs: for example $C'$ will be a vertex cover of $G'$ and $H'$ will be a subgraph of $G'$.

\section{More on maximum minimal vertex covers}

In this section we will present some preliminary technical results regarding the relations between the maximum minimal vertex covers of a graph and those of certain graphs derived from it.

\begin{lem}\label{GGhat}
Let $G$ be a graph and let $x_1$ and $x_2$ be two leaves of $G$ belonging to disjoint edges. Let $\dot{G}$ be the graph obtained from $G$ by identifying $x_1$ and $x_2$. Then
\[
\bight I(\dot{G})\geq \bight I(G)-1.
\]
Moreover
\[
\ara I(\dot{G})\leq \ara I(G).
\]
\end{lem}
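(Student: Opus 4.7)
I plan to prove the two inequalities by independent arguments. Write $x_1y_1$ and $x_2y_2$ for the two disjoint edges of $G$ with leaves $x_1,x_2$, and denote by $x$ the vertex of $\dot{G}$ obtained by identifying $x_1$ with $x_2$.

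For the big-height inequality, I would fix a maximum minimal vertex cover $C$ of $G$ (so $|C|=\bight I(G)$) and exhibit a minimal vertex cover $D$ of $\dot{G}$ with $|D|\geq |C|-1$. Since $x_1$ and $x_2$ are leaves, the minimality of $C$ forces exactly one of each pair $\{x_1,y_1\}$ and $\{x_2,y_2\}$ to lie in $C$. When $x_1,x_2\in C$ (so $y_1,y_2\notin C$), I take $D=C\setminus\{x_2\}$: it sits in $V(\dot{G})$, has size $|C|-1$, and is minimal in $\dot{G}$ because $x_1$ uniquely covers both $x_1y_1$ and the new edge $x_1y_2$, while every other vertex keeps its uniquely covered edge from $G$. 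When $y_1,y_2\in C$, already $D=C$ works, of size $|C|$. The subtle cases are the two symmetric ones; take, say, $x_1,y_2\in C$ and $y_1,x_2\notin C$. Then $C$ itself still covers $\dot{G}$, and the only vertex of $C$ whose uniquely covered edge in $G$ could fail to remain uniquely covered in $\dot{G}$ is $y_2$, because the edge $x_2y_2$ becomes $x_1y_2$ and $x_1\in C$. So either $C$ remains minimal in $\dot{G}$ (size $|C|$), or $D=C\setminus\{y_2\}$ is a minimal cover of $\dot{G}$ of size $|C|-1$. In every case $|D|\geq \bight I(G)-1$.

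For the arithmetical-rank inequality, I plan to use the substitution $x_2\mapsto x_1$. Let $R$ be the polynomial ring on $V(G)$, and let $\phi\colon R\to R'=R/(x_2-x_1)$ be the quotient map; then $R'$ is the polynomial ring on $V(\dot{G})$ and $\phi(I(G))=I(\dot{G})$. Given $f_1,\dots,f_r\in R$ with $\sqrt{(f_1,\dots,f_r)}=I(G)$, the standard bijection between ideals of $R'$ and ideals of $R$ containing $\ker\phi$, which commutes with taking radicals, yields $\sqrt{(\phi(f_1),\dots,\phi(f_r))}=\phi(\sqrt{I(G)+(x_2-x_1)})=I(\dot{G})$. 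Taking $r=\ara I(G)$ gives $\ara I(\dot{G})\leq \ara I(G)$.

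The main obstacle is the bookkeeping in the asymmetric cases of the first part: one has to pin down precisely which uniquely covered edge of $C$ can be destroyed by the identification, and to verify that discarding at most one vertex of $C$ then restores minimality in $\dot{G}$. The arithmetical-rank inequality, by contrast, is a soft consequence of the fact that $I(\dot{G})$ is a homomorphic image of $I(G)$ under a quotient by a linear form.
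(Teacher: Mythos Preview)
Your proposal is correct and follows essentially the same route as the paper. For the big-height inequality you run the same three-case analysis on whether $x_1,x_2$ lie in a maximum minimal vertex cover $C$ (with the asymmetric case resolved by possibly discarding $y_2$, exactly as in the paper), and for the arithmetical-rank inequality your quotient map $\phi\colon R\to R/(x_2-x_1)$ is just the abstract formulation of the paper's explicit substitution $x_1,x_2\mapsto x$.
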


\begin{proof}
Let $C$ be a maximum minimal vertex cover of $G$. In order to prove the first inequality, we show that $\dot{G}$ has a minimal vertex cover of cardinality $|C|$ or $|C|-1$. Let $x$ be the vertex of $\dot{G}$ obtained by identifying $x_1$ and $x_2$. For $i=1,2$, let $y_i$ be the only neighbour of $x_i$ in $G$. Then $y_1\neq y_2$.
\\ If $x_1, x_2\notin C$, then $y_1, y_2\in C$. In this case, $C$ is a minimal vertex cover of $\dot{G}$. So suppose that $x_1,x_2\in C$. Then $y_1, y_2\notin C$ and $\dot{C}=(C\setminus\{x_1, x_2\})\cup\{x\}$ is a minimal vertex cover of $\dot{G}$. Finally, suppose that $x_1\in C$, $x_2\notin C$. In this case $y_1\notin C$, $y_2\in C$. If all neighbours of $y_2$ other than $x_2$ belong to $C$, then  $\dot{C}=(C\setminus\{x_1, y_2\})\cup\{x\}$ is a minimal vertex cover of $\dot{G}$. Otherwise so is $\dot{C}=(C\setminus\{x_1\})\cup\{x\}$. This proves the first inequality. For the second inequality, let $S$ be the set of edge monomials of $I(G)$ other than $x_1y_1$ and $x_2y_2$. Then the set of edge monomials of $I(\dot{G})$ is $S\cup\{xy_1, xy_2\}$. Hence, if $q_1,\dots, q_r\in R$ are such that $I(G)=\sqrt{(q_1,\dots, q_r)}$, then $I(\dot{G})=\sqrt{(\overline{q}_1,\dots, \overline{q}_r)}$, where  $\overline{q}$ denotes the polynomial obtained from $q$ by replacing $x_1$ and $x_2$ with $x$.
\end{proof}

\begin{lem}\label{lemma1}
Let $G$ be a graph and $x$ one of its vertices. Let $G'$ be the graph obtained from $G$ by attaching a whisker to $x$. Then
\[
\bight I(G) \leq \bight I(G') \leq \bight I(G) + 1.
\]
Moreover, $\bight I(G) = \bight I(G')$ if and only if $x$ belongs to all maximum minimal vertex covers of $G$ (which are also maximum minimal vertex covers of $G'$).
\end{lem}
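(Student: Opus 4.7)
The plan is to work directly with minimal vertex covers, using the identification $\bight I(H) = \max\{|C| : C \text{ is a minimal vertex cover of } H\}$ recalled in the excerpt. The basic observation, used throughout, is that in any minimal vertex cover $C'$ of $G'$ exactly one of $x$ and $y$ lies in $C'$: the whisker $xy$ must be covered, so at least one of them belongs to $C'$; if both did, then $y$ could be removed (its only neighbour $x$ being already in $C'$), contradicting minimality.

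For $\bight I(G) \leq \bight I(G')$, I start from a maximum minimal vertex cover $C$ of $G$. If $x \in C$, then $C$ is already a minimal vertex cover of $G'$: the whisker is covered by $x$, and each minimality witness for $C$ in $G$ remains a witness in $G'$. If $x \notin C$, then $C \cup \{y\}$ is a minimal vertex cover of $G'$ of cardinality $|C|+1$, with $y$ witnessed by the whisker. Either way, $\bight I(G') \geq |C| = \bight I(G)$.

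For $\bight I(G') \leq \bight I(G)+1$, I start from a maximum minimal vertex cover $C'$ of $G'$. If $y \in C'$, then $x \notin C'$ and a direct check shows that $C' \setminus \{y\}$ is a minimal vertex cover of $G$, giving $|C'| \leq \bight I(G)+1$. If $x \in C'$, a subtler case distinction is required, and this is the main technical point: when $x$ has a neighbour in $G$ that lies outside $C'$, the vertex $x$ is already necessary in $G$ and $C'$ itself is a minimal vertex cover of $G$; when every neighbour of $x$ in $G$ lies in $C'$, the vertex $x$ becomes redundant in $G$ and one verifies that $C' \setminus \{x\}$ is a minimal vertex cover of $G$. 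Either way $|C'| \leq \bight I(G)+1$.

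For the equality characterisation, one implication follows immediately from the previous paragraphs: if some maximum minimal vertex cover $C$ of $G$ omits $x$, then $C \cup \{y\}$ is a minimal vertex cover of $G'$ of cardinality $\bight I(G)+1$, so $\bight I(G') > \bight I(G)$. For the converse, assume $x$ lies in every maximum minimal vertex cover of $G$ and take a maximum minimal vertex cover $C'$ of $G'$; in the two subcases above that produce a minimal vertex cover of $G$ of cardinality $|C'|-1$, the produced cover omits $x$, so by hypothesis it cannot have maximum cardinality, forcing $|C'|-1 < \bight I(G)$, while the remaining subcase gives $|C'| \leq \bight I(G)$ directly. Combined with the lower bound this yields $\bight I(G') = \bight I(G)$. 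The parenthetical statement then follows because any maximum minimal vertex cover $C$ of $G$ contains $x$ by hypothesis, hence by the lower-bound argument is a minimal vertex cover of $G'$ of cardinality $\bight I(G) = \bight I(G')$.
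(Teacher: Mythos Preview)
Your proof is correct and follows essentially the same approach as the paper's: both arguments pass between minimal vertex covers of $G$ and $G'$ by adding or removing $x$ or $y$, and both derive the equality characterisation from the observation that $C\cup\{y\}$ is a minimal vertex cover of $G'$ precisely when $x\notin C$. Your version is somewhat more explicit in the case analysis (particularly in distinguishing whether $x$ has a neighbour outside $C'$), while the paper states the corresponding step more tersely as ``$C'\setminus\{x\}$ or $C'\setminus\{y\}$ is one''; but the underlying ideas are identical.
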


\begin{proof}
Let $C$ be a maximum minimal vertex cover of $G$ and $y$ be the other endpoint of the whisker attached to $x$. If $C$ is not a vertex cover of $G'$, then $x\notin C$, so that $C \cup \{y\}$ is a minimal vertex cover of $G'$. Hence
\begin{equation}\label{1}
\bight I(G) \leq \bight I(G').
\end{equation}
Conversely, let $C'$ be a maximum minimal vertex cover of $G'$. If $C'$ is not a minimal vertex cover of $G$, then $C' \setminus \{x\}$ or $C' \setminus \{y\}$ is one. Thus
\[
\bight I(G') \leq \bight I(G) + 1.
\]
Now, equality holds in \eqref{1} if and only if for all maximum minimal vertex covers $C$ of $G$, $C \cup \{y\}$ is not a minimal vertex cover of $G'$, which, in turn, is true if and only if $x \in C$.
\end{proof}

\begin{lem}\label{minimal_induced}
Let $H$ be a subgraph of the graph $G$, and let $x$ be a vertex of $G$ such that $V(H)\cap V(G\setminus H)=\{x\}$. Let $C$ be a minimal vertex cover of $G$ and call $D$ the vertex cover it induces on $H$.
\begin{list}{}{}
\item[$(i)$] If $x\notin C$,  then $D$ is minimal.
\item[$(ii)$] If $x\in C$, then $D$ or $D\setminus\{x\}$ is minimal.
\end{list}
\end{lem}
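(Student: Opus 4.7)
The plan is to reduce both parts to a single key observation: for any $v\in D$ with $v\neq x$, the set $D\setminus\{v\}$ is not a vertex cover of $H$. Once this is established, (i) follows at once, since the hypothesis $x\notin C$ forces every element of $D=C\cap V(H)$ to be different from $x$, so no element of $D$ can be dropped while preserving the cover property.

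For the key observation, I would first note that $D$ is always a vertex cover of $H$: any edge of $H$ is an edge of $G$, so it meets $C$, and since both of its endpoints lie in $V(H)$, the witness actually lies in $C\cap V(H)=D$. Now fix $v\in D$ with $v\neq x$, and suppose for contradiction that $D\setminus\{v\}$ still covers $H$. Since $C$ is minimal in $G$, there is an edge $e'$ of $G$ whose unique endpoint in $C$ is $v$. If $e'$ were an edge of $H$, then $D\setminus\{v\}$ would miss it, contradicting our assumption; hence $e'\in G\setminus H$, and both endpoints of $e'$ lie in $V(G\setminus H)$. But $v\in V(H)$, so $v\in V(H)\cap V(G\setminus H)=\{x\}$, contradicting $v\neq x$.

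For (ii), assume $x\in C$, so $x\in D$. If $D$ is already minimal there is nothing to prove; otherwise some $v\in D$ makes $D\setminus\{v\}$ a vertex cover of $H$, and the key observation forces $v=x$, so $D':=D\setminus\{x\}$ is a vertex cover of $H$. To verify minimality of $D'$, suppose some $u\in D'$ satisfied that $D'\setminus\{u\}$ covers $H$. Since $u\neq x$ and $D\setminus\{u\}\supseteq D'\setminus\{u\}$, the larger set $D\setminus\{u\}$ would also cover $H$, contradicting the key observation applied to $u$.

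The argument is largely bookkeeping; the only point where I would expect to slow down, and the real content of the lemma, is recognising that the separating vertex $x$ is the unique possible obstruction to the minimality of the induced cover. This is exactly what lets part (ii) take the clean dichotomous form \emph{either $D$ or $D\setminus\{x\}$}, rather than some less controlled statement.
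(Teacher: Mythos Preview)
Your proof is correct and follows essentially the same approach as the paper's: both arguments use the minimality of $C$ to produce, for each $v\in D$, an edge of $G$ witnessing that $v$ cannot be dropped, and then invoke the separating condition $V(H)\cap V(G\setminus H)=\{x\}$ to force that edge into $H$ unless $v=x$. Your organization is slightly cleaner in that you isolate this as a single ``key observation'' and reuse it verbatim for part $(ii)$, whereas the paper reproves the analogous step inside the minimality check for $D\setminus\{x\}$; but the underlying logic is identical.
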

\begin{proof} $(i)$ If $D$ is empty, then there is nothing to prove. So assume that $D$ is not empty. Let $y\in D$. By assumption, $C\setminus\{y\}$ is not a vertex cover of $G$. Hence there is a neighbour $z$ of $y$ in $G$ such that $z\notin C$. Note that $y\neq x$, since $x\notin D$. On the other hand, $y$ is a vertex of $H$, so that $y\notin V(G\setminus H)$. Hence $yz$ cannot be an edge of $G\setminus H$, i.e., it is an edge of $H$. This edge is left uncovered by $D\setminus\{y\}$, which proves the minimality of $D$.\\
$(ii)$ Suppose that $D$ is not minimal. Then there is $y\in D$ such that $\overline{D}=D\setminus\{y\}$ is a vertex cover of $H$. But $C\setminus\{y\}$ is not a vertex cover of $G$. Hence there is some neighbour $z$ of $y$ in $G$ such that $z\notin C$. The edge $yz$ does not belong to $H$, because it is not covered by $\overline{D}$. Hence it belongs to $G\setminus H$. Thus $y\in V(H)\cap V(G\setminus H)$, i.e., $y=x$. Next we show that $\overline{D}$ is minimal. Let $v\in \overline{D}$. Then $v\neq x$. We prove that $\widetilde{D}=\overline{D}\setminus\{v\}$ is not a vertex cover of $H$. By assumption $C\setminus\{v\}$ is not a vertex cover of $G$. Hence there is a neighbour $w$ of $v$ in $G$ such that $w\notin C$. Now, if $vw$ is an edge of $H$, then $\widetilde{D}$ leaves $vw$ uncovered, which proves that $\overline{D}$ is minimal. Otherwise $vw$ is an edge of $G\setminus H$, but then $v\in V(H)\cap V(G\setminus H)$, which is impossible, since $v\neq x$.
\end{proof}

\begin{lem}\label{lemma1.1}
Let $G$ be a graph and let $H_1$ and $H_2$ be subgraphs of $G$ whose vertex sets have exactly one element $x$ in common and such that $G=H_1\cup H_2$. Suppose that $x$ belongs to all maximum minimal vertex covers of $H_1$ and of $H_2$. Let $D_1$ and $D_2$ be maximum minimal vertex covers of $H_1$ and $H_2$, respectively. Then $D_1\cup D_2$ is a maximum minimal vertex cover of $G$. Moreover, $x$ belongs to all maximum minimal vertex covers of $G$.
\end{lem}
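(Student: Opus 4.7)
The plan is to verify three things about $D := D_1 \cup D_2$: that it covers every edge of $G$, that it is minimal, and that $|D| = |D_1| + |D_2| - 1$ equals $\bight I(G)$. That $D$ is a vertex cover is immediate from $G = H_1 \cup H_2$, since each edge lies in some $H_i$ and is therefore covered by $D_i$.

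For minimality I would remove a vertex $v \in D$ and exhibit an uncovered edge of $G$. If $v \neq x$, then $v$ lies in exactly one $D_i$, and the minimality of $D_i$ in $H_i$ produces the required edge of $H_i \subseteq G$. The genuine content is the case $v = x$: since $x \in D_1$ and $D_1$ is minimal, there is a neighbour $z$ of $x$ in $H_1$ with $z \notin D_1$. The point is that $z \neq x$ (no loops) and $z \in V(H_1)$, so the hypothesis $V(H_1) \cap V(H_2) = \{x\}$ forces $z \notin V(H_2)$, and in particular $z \notin D_2$. Thus $xz$ is an edge of $G$ left uncovered by $D \setminus \{x\}$.

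For the maximality (and, simultaneously, the ``moreover'' statement), I would take an arbitrary minimal vertex cover $C$ of $G$, set $C_i = C \cap V(H_i)$, and apply Lemma~\ref{minimal_induced} to the pair $(G, H_i)$: either $C_i$ is already a minimal vertex cover of $H_i$, or $x \in C$ and $C_i \setminus \{x\}$ is. In both subcases the hypothesis that $x$ belongs to every maximum minimal vertex cover of $H_i$ yields $|C_i| \leq |D_i|$, with strict inequality $|C_i| \leq |D_i| - 1$ whenever $x \notin C_i$. Using $|C| = |C_1| + |C_2| - |C \cap \{x\}|$ this gives $|C| \leq |D_1| + |D_2| - 1 = |D|$ when $x \in C$, and $|C| \leq |D_1| + |D_2| - 2 < |D|$ when $x \notin C$. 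The first inequality proves that $D$ is a maximum minimal vertex cover, and the second proves that no maximum minimal vertex cover of $G$ can omit $x$.

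The main obstacle is really the maximality argument: one has to be careful to feed Lemma~\ref{minimal_induced} to both subgraphs simultaneously, and to extract the correct strict inequality from the ``$x$ lies in every maximum minimal vertex cover'' hypothesis in order to combine the two local bounds into a single global bound on $|C|$ that is tight exactly when $x \in C$.
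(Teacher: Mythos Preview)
Your argument is correct and follows the same route as the paper's, including the appeal to Lemma~\ref{minimal_induced} for the maximality step and the case split on whether $x$ lies in the given minimal cover. One small point: in the minimality case $v \neq x$ you implicitly use that the witness $z \notin D_i$ also lies outside $D_j$ for $j \neq i$; this holds because $x \in D_i$ forces $z \neq x$, whence $z \notin V(H_j)$ --- the paper handles both cases at once via exactly this observation.
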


\begin{proof}
Let $C=D_1\cup D_2$. Then $C$ is a vertex cover of $G$.    We first prove that $C$ is  minimal. Let $y\in C$, say $y\in D_1$. Since $D_1$ is minimal, $D_1\setminus\{y\}$ does not cover $H_1$, hence there is a vertex $z$ of $H_1$ such that $yz$ is an edge of $H_1$ and $z\notin D_1$. But then $z\neq x$, so that $z\notin D_2$. Thus $z\notin C$. It follows that $C\setminus\{y\}$ leaves the edge $yz$ uncovered. This proves the minimality of $C$.
\\ We now prove that $C$ is maximum. Set $d_1=\vert D_1\vert$ and $d_2=\vert D_2\vert$. Then $\vert C\vert =d_1+d_2-1$, since $x$ is the only common element of $D_1$ and $D_2$. Let $C^\ast$ be any minimal vertex cover of $G$. We show that $\vert C^\ast\vert\leq\vert C\vert$. Let $D_1^\ast$ and $D_2^\ast$ be the vertex covers induced by $C^\ast$ on $H_1$ and $H_2$, respectively. If $x\not\in C^\ast$, then $D_1^\ast$ and $D_2^\ast$ are disjoint, so that $\vert C^\ast\vert = \vert D_1^\ast\vert +\vert D_2^\ast\vert$. Moreover, $D_1^\ast$ and $D_2^\ast$ are minimal on $H_1$ and $H_2$, respectively. But, by assumption, they are not maximum, whence $\vert D_1^\ast\vert\leq d_1-1$ and $\vert D_2^\ast\vert\leq d_2-1$, so that $\vert C^\ast\vert\leq d_1+d_2-2$. This also shows that
$C^\ast$ is not maximum if $x\notin C^\ast$.\newline
Now suppose that $x\in C^\ast$, and let $i\in\{1,2\}$. If $D_i^\ast$ is minimal, then $\vert D_i^\ast\vert\leq d_i$. Otherwise, by Lemma \ref{minimal_induced} $(ii)$, $D_i^\ast\setminus\{x\}$ is a minimal vertex cover of $H_i$ and, by assumption, it is not maximum. Hence, once again, $\vert D_i^\ast\vert\leq d_i$.  Thus $\vert C^\ast\vert=\vert D_1^\ast\vert+\vert D_2^\ast\vert-1\leq d_1+d_2-1.$
\end{proof}

\begin{lem}\label{lemma1.2}
Let $G$ be a graph formed by two graphs $H_1$ and $H_2$ connected through an edge. Let $x_1$ and $x_2$ be the endpoints of this edge, where, for $i=1,2$, $x_i$ is a vertex of $H_i$. For $i=1,2$, let $D_i$ be a maximum minimal vertex cover of $H_i$ and set $H_i'=H_i\cup\{x_1x_2\}$. Suppose that one of the following conditions holds:
\begin{list}{}{}
\item[$(i)$] $x_1\in D_1$ and, for $i=1,2$, $x_i$ belongs to no maximum minimal vertex cover of $H_i'$;
\item[$(ii)$] $x_1$ belongs to all maximum minimal vertex covers of $H_1$.
\end{list}
Then $D_1\cup D_2$ is a maximum minimal vertex cover of $G$.
\end{lem}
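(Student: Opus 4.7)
The plan is to split the proof into three parts: (a) $C := D_1 \cup D_2$ is a vertex cover of $G$; (b) $C$ is minimal; (c) $|C^*| \leq |C|$ for every minimal vertex cover $C^*$ of $G$. Since $H_1$ and $H_2$ are vertex-disjoint, $|C| = d_1 + d_2$ with $d_i := |D_i|$, and I will freely use Lemmas \ref{lemma1} and \ref{minimal_induced}.

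Parts (a) and (b) should go quickly. The edge $x_1 x_2$ is covered because $x_1 \in D_1$ in both hypotheses: directly in (i), and in (ii) because $x_1$ lies in every maximum minimal vertex cover of $H_1$. For minimality, each $y \in D_i$ inherits a private neighbour $z \in V(H_i)\setminus D_i$ from the minimality of $D_i$ in $H_i$, and since $V(H_1) \cap V(H_2) = \emptyset$, also $z \notin D_{3-i}$; so $yz$ is a private edge of $y$ in $C$.

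For (c), set $D_i^* := C^* \cap V(H_i)$ and analyse by the status of $x_1, x_2$ in $C^*$ (they are not both absent, as $x_1 x_2 \in G$). When $x_1, x_2 \in C^*$, the minimality of $C^*$ forces a private neighbour of $x_i$ in $G$ that must lie inside $H_i$ (the cross-edge already has both endpoints in $C^*$); hence $D_i^* \setminus \{x_i\}$ fails to be a vertex cover of $H_i$, and Lemma \ref{minimal_induced}(ii) forces $D_i^*$ itself to be a minimal vertex cover of $H_i$, giving $|D_i^*| \leq d_i$.

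The main obstacle is the asymmetric case, say $x_1 \notin C^*$, $x_2 \in C^*$ (its mirror is analogous), which I would handle by introducing $H_i' := H_i \cup \{x_1 x_2\}$ (noting that $V(H_2') \cap V(G\setminus H_2') = \{x_1\}$ and $V(H_1') \cap V(G\setminus H_1') = \{x_2\}$, so Lemma \ref{minimal_induced} applies) and combining it with Lemma \ref{lemma1}. Under hypothesis (i): Lemma \ref{minimal_induced}(i) applied to $H_2'$ with $x = x_1 \notin C^*$ shows that $D_2^*$ is a minimal vertex cover of $H_2'$; since $x_2 \in D_2^*$ belongs to no maximum minimal vertex cover of $H_2'$, one gets $|D_2^*| < \bight I(H_2') \leq d_2 + 1$ by Lemma \ref{lemma1}, hence $|D_2^*| \leq d_2$; the routine bound $|D_1^*| \leq d_1$ from Lemma \ref{minimal_induced}(i) on $H_1$ then closes this subcase. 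Under hypothesis (ii): Lemma \ref{minimal_induced}(i) on $H_1$ gives $D_1^*$ minimal in $H_1$, and since $x_1 \notin D_1^*$ while $x_1$ lies in every maximum minimal vertex cover of $H_1$, $D_1^*$ is not maximum, so $|D_1^*| \leq d_1 - 1$; combined with $|D_2^*| \leq \bight I(H_2') \leq d_2 + 1$ (from Lemma \ref{minimal_induced}(i) on $H_2'$ together with Lemma \ref{lemma1}) this again gives $|C^*| \leq d_1 + d_2$. The mirror subcase $x_1 \in C^*$, $x_2 \notin C^*$ is symmetric under hypothesis (i), while under hypothesis (ii) it requires one extra dichotomy from Lemma \ref{minimal_induced}(ii) applied to $H_1$, both branches of which are absorbed by the constraint that $x_1$ lies in every maximum minimal vertex cover of $H_1$.
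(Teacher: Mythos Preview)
Your proposal is correct and follows essentially the same approach as the paper: first verifying that $D_1\cup D_2$ is a minimal vertex cover (using $x_1\in D_1$ and the vertex-disjointness of $H_1,H_2$), then bounding an arbitrary minimal vertex cover $C^*$ via Lemmas~\ref{lemma1} and~\ref{minimal_induced}. The only difference is organizational: you split uniformly by which of $x_1,x_2$ lie in $C^*$, whereas the paper, for hypothesis $(i)$, analyses each induced cover $E_i$ separately (minimal on $H_i$ or not), and for hypothesis $(ii)$ splits only on whether $x_1\in C^*$; the underlying estimates are the same.
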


\begin{proof} Set $D=D_1\cup D_2$. Suppose that $(i)$ or $(ii)$ holds. Then $x_1\in D$, so that $D$ is a vertex cover of $G$. It is also minimal, because $H_1$ and $H_2$ are vertex-disjoint, and  $D_i$ is minimal on $H_i$, for $i=1,2$. We prove that it is also maximum. Let $C$ be a minimal vertex cover of $G$ and, for $i=1,2$, let $E_i$ be the vertex cover it induces on $H_i$. Then $E_1$ and $E_2$ are disjoint and $C=E_1\cup E_2$. \newline
Suppose that $(i)$ holds. Let $i\in\{1,2\}$. If $E_i$ is minimal as a vertex cover of $H_i$, then $\vert E_i\vert\leq \vert D_i\vert$. Otherwise, by Lemma \ref{minimal_induced} $(ii)$, $x_i\in E_i$ and $E_i\setminus\{x_i\}$ is minimal, whence $E_i$ is a minimal vertex cover of $H_i'$. By assumption it is not maximum, i.e., $\bight I(H_i')\geq\vert E_i\vert+1$.  Moreover, the same assumption, together with Lemma \ref{lemma1}, implies that
$$\bight I(H_i')=\bight I(H_i)+1=\vert D_i\vert+1.$$
Hence $\vert E_i\vert\leq\vert D_i\vert.$ Thus in any case we have
$$\vert C\vert=\vert E_1\vert+\vert E_2\vert\leq \vert D_1\vert+\vert D_2\vert=\vert D\vert.$$
Now suppose that $(ii)$ holds. Then, in view of Lemma \ref{lemma1}, $D_1$ is a maximum minimal vertex cover of $H_1'$. If $x_1\in C$, then $E_1$ is a minimal vertex cover of $H_1'$, whence $\vert E_1\vert\leq \vert D_1\vert$. On the other hand, $E_2$ is a minimal vertex cover of $H_2$: otherwise, by Lemma \ref{minimal_induced} $(ii)$, so would be $E_2\setminus\{x_2\}$, and thus $C\setminus\{x_2\}=E_1\cup E_2\setminus\{x_2\}$ would be a minimal vertex cover of $G$, against the minimality of $C$. The minimality of $E_2$ implies that $\vert E_2\vert\leq\vert D_2\vert$. If $x_1\not\in C$, then $x_2\in C$ and $E_1$ is a non-maximum minimal vertex cover of $H_1$, whereas $E_2$ is a minimal vertex cover of $H_2'$. Hence $\vert E_1\vert\leq \vert D_1\vert-1$ and, in view of Lemma \ref{lemma1}, $\vert E_2\vert\leq \vert D_2\vert+1$. Thus we always have $\vert C\vert\leq\vert D_1\vert+\vert D_2\vert=\vert D\vert.$
\end{proof}

\section{Graphs with pairwise disjoint cycles}

In this section we present our main result. Its proof, which will be performed by induction on the number of edges, essentially rests on the way in which the maximum minimal vertex covers of a graph $G$ \textit{with pairwise disjoint cycles} (i.e. whose cycles are pairwise vertex-disjoint)  behave upon removal of special edges. Lemma \ref{lemma3} will give a complete classification of the possible cases.

\begin{defn}
Let $G$ be a graph and $x$ one of its vertices. A neighbour of $x$ in $G$ will be called \textit{free} if it does not lie on a cycle through $x$.
\end{defn}

\begin{defn} Let $G$ be a graph and $x$ be one of its vertices. Let $C$ be a minimal vertex cover of $G$ such that $x \notin C$. Then a neighbour $y$ of $x$ will be called \textit{redundant} (\textit{with respect to $C$}) if
\[
\{y\} \cup N(y) \setminus \{x\} \subset C.
\]
\end{defn}

\begin{rem}\label{remark2}
Let $C$ be a minimal vertex cover of $G$ such that $x \notin C$. Then $C$ contains a redundant neighbour $y$ of $x$ if and only if $C \setminus \{y\}$ is a minimal vertex cover of $G \setminus \{xy\}$. Moreover, in this case $C \cup \{x\}$ is not a minimal vertex cover of $G$.
\end{rem}

\begin{lem}\label{lemma3}
Let $G$ be a graph with pairwise disjoint cycles, and let $x$ be one of its vertices. Suppose that there is a maximum minimal vertex cover $C$ of $G$ such that $x \notin C$, and that for all minimal vertex covers with this property, there is a redundant neighbour of $x$. Then in $C$ there is either a free redundant neighbour $y$ of $x$ or a non-free redundant neighbour $z_1$ of $x$ for which the statement $1)$ or $2)$, respectively, is true.\newline
$1)$ Set $\overline{G} = G \setminus \{xy\}$. Then one of the following conditions holds:
\begin{itemize}
\item[$(a)$] $\bight I(\overline{G}) = \bight I(G) - 1$;
\item[$(b)$] if $\overline{H}$ is the connected component of $y$ in $\overline{G}$, and $\overline{K} = \overline{G} \setminus \overline{H}$, then $x$ belongs to all maximum minimal vertex covers of $\overline{K}$ (so that, in particular,  $\overline{K}$ is not empty).
\end{itemize}
$2)$ Set $\overline{G} = G \setminus \{xz_1\}$. Then call $z_2$ the other non-free neighbour of $x$ and set $\widetilde{G} = G \setminus \{xz_1,xz_2\}$. Then one of the following conditions holds:
\begin{itemize}
\item[$(c)$] $\bight I(\overline{G}) = \bight I(G) - 1$;
\item[$(d)$] $\bight I(\widetilde{G}) = \bight I(G) - 1$;
\item[$(e)$] if $\widetilde{H}$ is the connected component of $z_1$ (and $z_2$) in $\widetilde{G}$, and $\widetilde{K} = \widetilde{G} \setminus \widetilde{H}$, then $x$ belongs to all maximum minimal vertex covers of $\widetilde{K}$ (so that, in particular,  $\widetilde{K}$ is not empty).
\end{itemize}
Moreover, if for all choices of $C$ there is a free redundant neighbour of $x$, then (a) or (b) is true for some free redundant neighbour $y$ of $x$.
\end{lem}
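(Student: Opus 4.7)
The plan is to perform a detailed case analysis on the types of redundant neighbors of $x$ available in the given cover $C$. Because $G$ has pairwise disjoint cycles, $x$ lies on at most one cycle, so it has at most two non-free neighbors, namely the cycle-neighbors $z_1$ and $z_2$ (when $x$ is on a cycle). The main dichotomy is whether $C$ contains a free redundant neighbor of $x$, in which case I aim for (a) or (b), or only non-free ones, in which case I aim for (c), (d), or (e).

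In the free case I would pick a free redundant neighbor $y$ of $x$ in $C$. Since $y$ is free, the edge $xy$ lies on no cycle and is therefore a bridge, so $\overline{G} = G \setminus \{xy\}$ splits as the disjoint union of the component $\overline{H}$ of $y$ and the complementary subgraph $\overline{K}$ containing $x$. By Remark \ref{remark2}, $C \setminus \{y\}$ is a minimal vertex cover of $\overline{G}$ of size $\bight I(G) - 1$, whose restrictions $C_1' := (C \setminus \{y\}) \cap V(\overline{H})$ and $C_2 := C \cap V(\overline{K})$ are minimal vertex covers of $\overline{H}$ and $\overline{K}$ respectively (with $x \notin C_2$), yielding $\bight I(\overline{G}) \geq \bight I(G) - 1$. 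If equality holds, (a) is obtained. Otherwise I would prove (b) by contradiction, assuming a maximum minimal vertex cover $D$ of $\overline{K}$ avoids $x$ and combining it with an appropriately chosen minimal vertex cover $E$ of $\overline{H}$: when $y \in E$ the union $E \cup D$ is directly a minimal vertex cover of $G$; when $y \notin E$ I would adjoin $x$ and prune elements of $D$ whose only minimality witness in $\overline{K}$ is $x$. The aim is to produce a minimal vertex cover of $G$ of size strictly exceeding $\bight I(G)$, contradicting the maximality of $C$; the standing hypothesis that every minimal vertex cover of $G$ avoiding $x$ has a redundant neighbor of $x$ is invoked to rule out intermediate degenerate cases.

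In the non-free case every redundant neighbor of $x$ in $C$ lies on the unique cycle through $x$; I would pick such a redundant $z_1$ and let $z_2$ denote the other cycle-neighbor. I would examine $\overline{G} = G \setminus \{xz_1\}$ and $\widetilde{G} = G \setminus \{xz_1, xz_2\}$. Remark \ref{remark2} again gives $\bight I(\overline{G}) \geq \bight I(G) - 1$, with equality giving (c). If (c) fails, I would pass to $\widetilde{G}$: removing both cycle-edges at $x$ destroys the cycle through $x$, so $\widetilde{G}$ separates into the component $\widetilde{H}$ of $\{z_1, z_2\}$ and the remainder $\widetilde{K}$ containing $x$. An argument parallel to the free case, with an additional distinction based on whether $z_2 \in C$ and on how the redundancy of $z_1$ propagates once the other cycle-edge is removed, then yields (d) when $\bight I(\widetilde{G}) = \bight I(G) - 1$ and (e) otherwise.

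The hardest part will be establishing (b) (and analogously (e)): the naive union of a minimal vertex cover of $\overline{H}$ with one of $\overline{K}$ need not remain minimal on $G$, because vertices adjacent to $y$ (respectively to $z_1$ or $z_2$) can have their minimality witnesses absorbed by the newly introduced cover element, so the pruning step must be tracked with care. The moreover clause is then immediate from the case split: under its strengthened hypothesis the given $C$ itself has a free redundant neighbor $y$, so the free-case analysis applies to $C$ and already yields (a) or (b) for $y$.
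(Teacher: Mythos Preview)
Your overall dichotomy is too rigid, and the contradiction argument you sketch for (b) does not close.  Suppose (a) fails for your chosen free redundant $y$, so $\bight I(\overline{G})\geq c$, and suppose some maximum minimal vertex cover $D$ of $\overline{K}$ avoids $x$.  Since $\overline{G}=\overline{H}\sqcup\overline{K}$, a maximum minimal cover of $\overline{G}$ is $E\cup D$ with $|E|+|D|\geq c$.  When $y\in E$, $E\cup D$ is indeed a minimal vertex cover of $G$ not containing $x$, but its size is only $\geq c$, not $>c$; there is no contradiction.  What you have produced is a \emph{new} maximum minimal vertex cover of $G$ avoiding $x$.  The paper exploits exactly this: it shows that the new cover has one fewer free redundant neighbour of $x$ than $C$ did, and proceeds by induction on the number $|S|$ of free redundant neighbours.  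Your ``adjoin $x$ and prune'' branch (when $y\notin E$) has the same problem: the pruning may remove as many vertices as you added, and you again obtain only $\geq c$.

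A second, related gap: your case split assumes that if $C$ contains a free redundant neighbour then some free $y$ must satisfy (a) or (b).  This is not what the paper proves.  In the paper's inductive step, after exhausting all $y\in S$ (each failing (a) and (b) with $y\notin\overline{E}$ for every maximum $\overline{E}$), one constructs a large minimal cover of the residual graph $L=G\setminus\bigcup_i(\overline{H}_i\cup\{xy_i\})$ and concludes that some \emph{non-free} redundant neighbour $z_1$ must exist in $C$, for which (e) is then established.  So the lemma may well be satisfied via a non-free neighbour even when $C$ has free redundant neighbours; your dichotomy rules this out a priori.

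Finally, the ``moreover'' clause is not immediate from your argument: it requires knowing that under the strengthened hypothesis the induction never bottoms out at $|S|=0$, which is precisely what the paper's inductive framework provides and what your single-shot analysis lacks.
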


\begin{proof}
Set $c = \bight I(G)$. Let $C$ be a maximum minimal vertex cover of $G$ such that $x \notin C$. Let $S$ be the set of redundant free neighbours of $x$ with respect to $C$. We proceed by induction on $|S|$.
\\ First suppose that $|S|=0$. Then, according to the assumption, some non-free neighbour $z_1$ of $x$ is redundant with respect to $C$. Then, by Remark \ref{remark2}, $C \setminus \{z_1\}$ is a minimal vertex cover of $\overline{G} = G \setminus \{xz_1\}$, so that $\bight I(\overline{G}) \geq c-1.$ Suppose that $(c)$ is not true. Then
\begin{equation}\label{GG'}
\bight I(\overline{G}) \geq \bight I(G).
\end{equation}
Let $\overline{C}$ be a maximum minimal vertex cover of $\overline{G}$. Then $\overline{C}$ or $\overline{C} \setminus \{x\}$ or $\overline{C} \setminus \{z_2\}$ is a minimal vertex cover of $\widetilde{G}$, whence $\bight I(\widetilde{G}) \geq \bight I(\overline{G}) - 1 \geq c-1$. Suppose that also $(d)$ is false. Then
\begin{equation}\label{G'G''}
\bight I(\widetilde{G}) \geq \bight I(G).
\end{equation}
\\ Let $D$ and $E$ be the covers induced by $C$ on $\widetilde{K}$ and $\widetilde{H}$, respectively (see Figure \ref{F2.lemma4.4}). Note that all neighbours of $x$ lying in $\widetilde{K}$ are free, whence, by assumption, in $D$ there are no redundant neighbours of $x$.  Let $\widetilde{C}$ be a maximum minimal vertex cover of $\widetilde{G}$. Let $\widetilde{D}$ and $\widetilde{E}$ be the covers induced by $\widetilde{C}$ on $\widetilde{K}$ and $\widetilde{H}$, respectively.  Since $\widetilde{K}$ and $\widetilde{H}$ are vertex-disjoint, $D, E$ and $\widetilde{D}, \widetilde{E}$ are disjoint. Moreover, $\widetilde{G}$ is the vertex-disjoint union of $\widetilde{H}$ and $\widetilde{K}$, so that $\widetilde{C}$ is the disjoint union of $\widetilde{D}$ and $\widetilde{E}$, and $\widetilde{D}$, $\widetilde{E}$ are maximum minimal vertex covers of $\widetilde{K}$ and $\widetilde{H}$, respectively. Thus $|\widetilde{C}|=|\widetilde{D}|+|\widetilde{E}|$. Since $G$ is spanned by $\widetilde{H}\cup\widetilde{K}$, we also have that $C=D\cup E$, and $|C|=|D|+|E|$.  But, in view of \eqref{G'G''}, $|\widetilde{C}| \geq |C|$. Since $x\notin C$ and $x$ is the only vertex that $\widetilde{K}$ has in common with $G\setminus\widetilde{K}=\widetilde{H}\cup\{xz_1,xz_2\}$, by Lemma \ref{minimal_induced} $(i)$, $D$ is minimal.  Hence $|D| \leq |\widetilde{D}|$. First suppose that $x \notin \widetilde{C}$. In this case the inequality $|D| < |\widetilde{D}|$ would imply that $\widetilde{D} \cup E$ is a minimal vertex cover of $G$ of cardinality greater than $c$, which is impossible. Hence $|D| = |\widetilde{D}|$, so that $|E| \leq |\widetilde{E}|$. Note that, for $i=1,2$, if $z_i \in \widetilde{E}$, then not all  neighbours of $z_i$ lying in $\widetilde{H}$ (i.e., other than $x$) belong to $\widetilde{E}$. Hence one of the following cases occurs:
\begin{itemize}
\item[$\bullet$] $\{z_1, z_2\} \subset \widetilde{E}$, in which case $D \cup \widetilde{E}$ is a maximum minimal vertex cover of $G$ without $x$ and without redundant neighbours of $x$, against our assumption;
\item[$\bullet$] $\{z_1, z_2\} \not\subset \widetilde{E}$, in which case $D \cup \{x\} \cup \widetilde{E}$ is a minimal vertex cover of $G$. But this, again, is impossible, since its cardinality is greater than $c$.
\end{itemize}
We thus conclude that $x \in \widetilde{C}$ for all maximum minimal vertex covers $\widetilde{C}$ of $\widetilde{G}$. Now, if $\widetilde{D}^{\ast}$ and $\widetilde{E}^{\ast}$ are maximum minimal vertex covers of $\widetilde{K}$ and $\widetilde{H}$, respectively, then their union is a maximum minimal vertex cover of $\widetilde{G}$. This shows that $(e)$ is true, and completes the proof of the induction basis.
\\ Now suppose that $|S| \geq 1$.   Suppose that $(a)$ is false for all choices of $y\in S$, where $\overline{G} = G \setminus \{xy\}$. Then \eqref{GG'} holds for all choices of $y\in S$. Let $y \in S$ be fixed.  Let $\overline{D}$ be a maximum minimal vertex cover of $\overline{K}$, and $\overline{E}$ a maximum minimal vertex cover of $\overline{H}$ (see Figure \ref{F1.lemma4.4}). Since $\overline{G}$ is the vertex-disjoint union of $\overline{H}$ and $\overline{K}$, $\overline{C} = \overline{D} \cup \overline{E}$ is a maximum minimal vertex cover of $\overline{G}$ and $|\overline{C}| = |\overline{D}| + |\overline{E}|$.

\begin{figure}[ht!]\centering
\begin{pspicture}(-2,-0.7)(4,1.7)
\psline[linewidth=1.5pt,linestyle=dotted](0,0)(1,0)
\psline[linestyle=dashed](1,0)(3,0)
\psline[linestyle=dashed](3,0)(3.71,0.71)
\psline[linestyle=dashed](3,0)(3.71,-0.71)
\psline(0,0)(-0.71,0.71)
\psline(0,0)(-0.71,-0.71)
\psline(-0.97,1.67)(-1.67,0.97)
\psline(-0.97,1.67)(-0.71,0.71)
\psline(-0.71,0.71)(-1.67,0.97)

\psdots(0,0)
\psdots[dotsize=3pt 3,dotstyle=Bo](1,0)
\psdots[dotsize=3pt 3,dotstyle=Bo](2,0)
\psdots(3,0)
\psdots[dotsize=3pt 3,dotstyle=Bo](3.71,-0.71)
\psdots[dotsize=3pt 3,dotstyle=Bo](3.71,0.71)
\psdots[dotsize=3pt 3,dotstyle=Bo](-0.71,0.71)
\psdots[dotsize=3pt 3,dotstyle=Bo](-0.71,-0.71)
\psdots[dotsize=3pt 3,dotstyle=Bo](-1.67,0.97)
\psdots(-0.97,1.67)

\uput[300](0,0){$x$}
\uput[300](1,0){$y$}
\rput(1,1){$\overline{G}$}
\rput(2.5,-0.7){$\overline{H}$}
\rput(-1,0){$\overline{K}$}
\end{pspicture}
\caption{}\label{F1.lemma4.4}
\end{figure}
\noindent {\small\textit{In Figure \ref{F1.lemma4.4} the edges of $\overline{H}$ are dashed and the empty dots represent the vertices of the vertex cover $C$.}}
\\\\
\noindent If $x \in \overline{D}$ for all choices of $\overline{D}$, then $(b)$ is true.
\\ So suppose that $x \notin \overline{D}$ for some $\overline{D}$. If $D$ is the cover induced by $C$ on $\overline{K}$, we then have $|\overline{D}| = |D|$, because $D$ and $\overline{D}$ are interchangeable in $C$ and $\overline{C}$. Let $E$ be the cover induced by $C$ on $\overline{H}$. Then $|C| = |D| + |E|$. Since, in view of \eqref{GG'}, $|\overline{C}| \geq |C|$, it follows that $|\overline{E}| \geq |E|$. If $y \in \overline{E}$ for some choice of $\overline{E}$, then not all neighbours of $y$ lying in $\overline{H}$ (i.e., other than $x$) belong to $\overline{E}$. Moreover, $y$ is the only neighbour of $x$ lying in $\overline{H}$: this follows from the fact that $y$ is free.  Hence $D \cup \overline{E}$ is a maximum minimal vertex cover of $G$ in which the set of redundant free neighbours of $x$ is $S \setminus \{y\}$, so that induction applies.
\\ Now suppose that, for all $y$ in $S$, we have that $x \notin \overline{D}$ for some choice of $\overline{D}$ and $y \notin \overline{E}$ for all choices of $\overline{E}$. Call $y_1,\dots, y_k$ the elements of $S$ and, for all $i=1,\dots,k$, let $\overline{H}_i$ be the connected component of $y_i$ in $\overline{G}_i = G \setminus \{xy_i\}$, and call $\overline{D}_i$ and $\overline{E}_i$ some maximum minimal vertex covers of $\overline{K}_i = \overline{G}_i \setminus \overline{H}_i$ and $\overline{H}_i$, respectively, where $x \notin \overline{D}_i$, and $y_i \notin \overline{E}_i$. Moreover, let $E_i$ be the cover induced by $C$ on $\overline{H}_i$. Note that the subgraphs $\overline{H}_i$ are pairwise vertex-disjoint: if $z$ were a common vertex of $\overline{H}_i$ and $\overline{H}_j$, with $i\neq j$, then $y_i$ and $y_j$ would lie on a cycle through $x$ and $z$, and would therefore not be free neighbours of $x$. Since $x \notin C$, $E_i$ is also the cover induced by $C$ on $\overline{H}_i \cup \{xy_i\}$. Recall that, for all $i=1,\dots,k$, $|\overline{E}_i| \geq |E_i|$. Set
\[
L = G \setminus \bigcup_{i=1}^k(\overline{H}_i \cup\{xy_i\}).
\]
Let $F$ be the vertex cover induced by $C$ on $L$. Then $C = F \cup (\bigcup_{i=1}^k E_i)$. If in $L$ there are no redundant neighbours of $x$ with respect to $F$, then
\[
F \cup \{x\} \cup \left( \bigcup_{i=1}^k \overline{E}_i \right)
\]
is a minimal vertex cover of $G$ of cardinality greater than $c$, which is impossible. We thus conclude that in $L$ there is some redundant neighbour of $x$ with respect to $F$ (and with respect to $C$), which is necessarily not free. We may call it $z_1$. Let $\widetilde{E}$ be the vertex cover induced by $C$ on $\widetilde{H}$ and let $\widetilde{D}$ be the vertex cover induced by $C$ on $\widetilde{K}$ (see Figure \ref{F2.lemma4.4}). Then $C=\widetilde{D}\cup\widetilde{E}$  and $\widetilde{D}$, $\widetilde{E}$ are disjoint, so that $\widetilde{D}=C\setminus\widetilde{E}$.

\begin{figure}[ht!]\centering
\begin{pspicture}(-2,-1.3)(4,2.4)
\psline(0,0)(-0.71,0.71)
\psline(0,0)(-0.71,-0.71)
\psline(-0.97,1.67)(-1.67,0.97)
\psline(-0.97,1.67)(-0.71,0.71)
\psline(-0.71,0.71)(-1.67,0.97)
\psline[linewidth=1.5pt,linestyle=dotted](0,0)(0.5,-0.87)
\psline[linestyle=dashed](0.5,-0.87)(1.36,-1.37)
\psline[linestyle=dashed](1.36,-1.37)(1.86,-0.5)
\psline[linewidth=1.5pt,linestyle=dotted](0,0)(1,0)
\psline[linestyle=dashed](1,0)(1.86,-0.5)
\psline[linestyle=dashed](1,0)(1.71,0.71)
\psline[linestyle=dashed](1.71,0.71)(2.42,1.42)
\psline[linestyle=dashed](2.42,1.42)(2.42,2.42)
\psline[linestyle=dashed](2.42,1.42)(3.42,1.42)

\psdots(0,0)
\psdots[dotsize=3pt 3,dotstyle=Bo](1,0)
\psdots[dotsize=3pt 3,dotstyle=Bo](-0.71,0.71)
\psdots[dotsize=3pt 3,dotstyle=Bo](-0.71,-0.71)
\psdots[dotsize=3pt 3,dotstyle=Bo](-1.67,0.97)
\psdots(-0.97,1.67)
\psdots[dotsize=3pt 3,dotstyle=Bo](0.5,-0.87)
\psdots(1.36,-1.37)
\psdots[dotsize=3pt 3,dotstyle=Bo](1.86,-0.5)
\psdots[dotsize=3pt 3,dotstyle=Bo](1.71,0.71)
\psdots(2.42,1.42)
\psdots[dotsize=3pt 3,dotstyle=Bo](2.42,2.42)
\psdots[dotsize=3pt 3,dotstyle=Bo](3.42,1.42)

\uput[65](0,0){$x$}
\uput[120](1,0){$z_1$}
\uput[225](0.5,-0.87){$z_2$}
\rput(0.5,1.5){$\widetilde{G}$}
\rput(2.8,0.5){$\widetilde{H}$}
\rput(-1,0){$\widetilde{K}$}
\end{pspicture}
\caption{}\label{F2.lemma4.4}
\end{figure}
\noindent {\small\textit{In Figure \ref{F2.lemma4.4} the edges of $\widetilde{H}$ are dashed and the dotted edges do not belong to $\widetilde{G}$.}}
\\\\
Furthermore, for all $i=1,\dots, k$, $\widetilde{H}$ is vertex-disjoint from $\overline{H}_i$ (and thus $\widetilde{E}$ is disjoint from $E_i$), because otherwise  $y_i$ would not be a free neighbour of $x$. Therefore
\[
\widetilde{D} = C\setminus\widetilde{E}= (F \setminus \widetilde{E}) \cup \left( \bigcup_{i=1}^k E_i \right).
\]
Since $x \notin \widetilde{D}$, and $x$ is the only common vertex of $\widetilde{K}$ and $G\setminus\widetilde{K}=\widetilde{H}\cup\{xz_1, xz_2\}$, from Lemma \ref{minimal_induced} $(i)$ it follows that $\widetilde{D}$ is a minimal vertex cover of $\widetilde{K}$. Let $\widetilde{D}^{\ast}$ be a maximum minimal vertex cover of $\widetilde{K}$, so that $|\widetilde{D}| \leq |\widetilde{D}^{\ast}|$. Suppose that $x \notin \widetilde{D}^{\ast}$. In this case replacing $\widetilde{D}$ by $\widetilde{D}^{\ast}$ in $C=\widetilde{D}\cup\widetilde{E}$ produces a minimal vertex cover of $G$, and the maximality of  $C$ implies $|\widetilde{D}^{\ast}| \leq |\widetilde{D}|$, so that equality holds, and $\widetilde{D}$ is maximum. Now, since $x\notin C$, we have that $z_1,z_2\in \widetilde{E}$. Thus in $F \setminus \widetilde{E}$ there are no redundant neighbours of $x$. This implies that
\[
(F \setminus \widetilde{E}) \cup \{x\} \cup \left( \bigcup_{i=1}^k \overline{E}_i \right)
\]
is a minimal vertex cover of $\widetilde{K}$ of cardinality greater than $|\widetilde{D}|$, which contradicts the maximality of $\widetilde{D}$. This shows that $x \in \widetilde{D}^{\ast}$ for all maximum minimal vertex covers of $\widetilde{K}$, i.e., $(e)$ holds.
\end{proof}

\begin{cor}\label{remark}
\begin{itemize}
\item[$(i)$] If condition $(b)$ of Lemma $\ref{lemma3}$ holds for the graph $G$ with respect to $y$, then every maximum minimal vertex cover of $\overline{G} = G \setminus \{xy\}$ contains $x$ and is a maximum minimal vertex cover of $G$.
\item[$(ii)$] If condition $(e)$ of Lemma $\ref{lemma3}$ holds for the graph $G$ with respect to $z_1$, then  every maximum minimal vertex cover of $\widetilde{G}$ contains $x$ and is a maximum minimal vertex cover of $\overline{G} = G \setminus \{xz_1\}$.
\end{itemize}
\end{cor}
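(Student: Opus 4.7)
My plan is to reduce both parts of the corollary to a single application of Lemma \ref{lemma1.2}$(ii)$, by recognizing that the structural hypotheses $(b)$ and $(e)$ of Lemma \ref{lemma3} are nothing other than hypothesis $(ii)$ of Lemma \ref{lemma1.2} applied to suitably chosen decompositions.

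For $(i)$, I would first note that $\overline{K}$ and $\overline{H}$ are vertex-disjoint subgraphs of $\overline{G}$ (they are complementary unions of connected components), with $x\in V(\overline{K})$, $y\in V(\overline{H})$, and $\overline{K}$ nonempty by the parenthetical clause of $(b)$. Reinserting the edge $xy$ yields $G = \overline{K}\cup\{xy\}\cup\overline{H}$, which exhibits $G$ as $\overline{K}$ and $\overline{H}$ connected through $xy$. Since by $(b)$ the vertex $x$ lies in every maximum minimal vertex cover of $\overline{K}$, hypothesis $(ii)$ of Lemma \ref{lemma1.2} is satisfied. Applying that lemma gives that $\overline{D}^{\ast}\cup\overline{E}^{\ast}$ is a maximum minimal vertex cover of $G$ for every choice of maximum minimal vertex covers $\overline{D}^{\ast}$ of $\overline{K}$ and $\overline{E}^{\ast}$ of $\overline{H}$; combined with the identity $\bight I(\overline{G}) = \bight I(\overline{K})+\bight I(\overline{H})$ coming from the vertex-disjoint union, this yields $\bight I(G) = \bight I(\overline{G})$.

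Now, for any maximum minimal vertex cover $\overline{C}$ of $\overline{G}$, the vertex-disjoint decomposition of $\overline{G}$ forces $\overline{C}$ to split as the disjoint union of a maximum minimal vertex cover of $\overline{K}$ and one of $\overline{H}$, so $(b)$ gives $x\in\overline{C}$. Then $\overline{C}$ covers the edge $xy$, hence is a vertex cover of $G$; minimality in $G$ is immediate because $\overline{G}\subseteq G$ and $\overline{C}$ is minimal in $\overline{G}$; and the cardinality matches $\bight I(G)$ by the previous equality. For $(ii)$, I would replay exactly this argument with $(\widetilde{G},\widetilde{K},\widetilde{H},xz_2)$ in the role of $(\overline{G},\overline{K},\overline{H},xy)$ and with $\overline{G}$ in the role of $G$; the key point is that $\overline{G} = \widetilde{K}\cup\{xz_2\}\cup\widetilde{H}$, since $\overline{G}$ is obtained from $\widetilde{G}$ by restoring only $xz_2$ while $xz_1$ remains absent. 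Condition $(e)$ is then hypothesis $(ii)$ of Lemma \ref{lemma1.2} for this decomposition, producing $\bight I(\overline{G}) = \bight I(\widetilde{G})$, and the rest of the verification is identical.

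The main, and essentially only, obstacle is the packaging step: spotting that conditions $(b)$ and $(e)$ are tailored precisely so that Lemma \ref{lemma1.2}$(ii)$ applies, with the ``bridge'' edge being the very edge that was deleted to pass from $G$ to $\overline{G}$ in part $(i)$ (respectively, the second deleted edge in part $(ii)$). Once that observation is made, the remaining steps are routine vertex-cover bookkeeping and no deeper argument seems to be required.
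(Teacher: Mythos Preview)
Your proposal is correct and follows essentially the same approach as the paper: both arguments use that $\overline{G}$ (respectively $\widetilde{G}$) decomposes as the vertex-disjoint union of $\overline{K}$ and $\overline{H}$ (respectively $\widetilde{K}$ and $\widetilde{H}$), so every maximum minimal vertex cover splits and hence contains $x$ by condition $(b)$ (respectively $(e)$), and then apply Lemma~\ref{lemma1.2}$(ii)$ across the bridge edge $xy$ (respectively $xz_2$) to conclude. Your version is somewhat more explicit in spelling out the equality $\bight I(G)=\bight I(\overline{G})$ and the minimality check, but the underlying mechanism is identical.
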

\begin{proof}
 Suppose that condition $(b)$ holds for $G$. Since $\overline{G}$ is the vertex-disjoint union of $\overline{H}$ and $\overline{K}$, the maximum minimal vertex covers of $\overline{G}$ are the unions of a maximum minimal vertex cover of $\overline{H}$ and a maximum minimal vertex cover of $\overline{K}$. This implies that all maximum minimal vertex covers of $\overline{G}$ contain $x$. Moreover, since in $G$ the subgraphs $\overline{H}$ and $\overline{K}$ are connected through the edge $xy$, the second part of claim $(i)$ follows from Lemma \ref{lemma1.2} $(ii)$.\\
The first part of claim $(ii)$ follows as above from the fact that $\widetilde{G}$ is the vertex-disjoint union of $\widetilde{H}$ and $\widetilde{K}$. For the second part, note that $\widetilde{G}=\overline{G}\setminus\{xz_2\}$, and $\widetilde{H}$ and $\widetilde{K}$ are connected in $\overline{G}$ through the edge $xz_2$. Hence the claim once again follows from Lemma \ref{lemma1.2} $(ii)$.
\end{proof}

\begin{lem}\label{fundamental}
Let $G$ be a non-empty graph with pairwise disjoint cycles. Suppose that all edges of $G$ that do not belong to a cycle are terminal. Then every connected component of $G$ is a star, a cycle, or a whisker graph on a cycle.
\end{lem}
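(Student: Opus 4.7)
The plan is to analyse a single connected component $H$ of $G$ according to the number of cycles it contains. First I would show that $H$ contains at most one cycle. If $K_1$ and $K_2$ were two distinct cycles in $H$, they would be vertex-disjoint by hypothesis, and a shortest path $P\colon v_0v_1\cdots v_k$ from $K_1$ to $K_2$ would exist in $H$, whose edges cannot lie on any cycle. If $k=1$, both endpoints of the unique edge of $P$ sit on a cycle and hence have degree at least $2$, so the edge is not terminal. If $k\ge 2$, the edge $v_0v_1$ has $v_0$ on $K_1$ and $v_1$ of degree at least $2$ along $P$, so it is again not terminal. Either case contradicts the hypothesis.

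Next, if $H$ has no cycle, then $H$ is a tree in which every edge is terminal, so every non-leaf vertex has only leaves as neighbours (each incident edge is terminal and its non-leaf end cannot be the leaf). If $H$ contained two non-leaf vertices $v$ and $w$, the unique path from $v$ to $w$ in $H$ would contain at least one edge whose both endpoints have degree at least $2$, contradicting terminality. Hence $H$ has at most one non-leaf vertex and is therefore a star, with a single edge appearing as the trivial instance.

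Finally, suppose $H$ has a unique cycle $K$. I would show that every non-cycle edge $e=uv$, with $v$ the leaf endpoint, satisfies $u\in V(K)$, so that $e$ is a whisker attached to $K$. Suppose instead $u\notin V(K)$; then $u$ is not a leaf, because connectedness of $H$ together with the presence of $K$ forces $u$ to have a neighbour besides $v$. Every edge incident to $u$ is a non-cycle edge, hence terminal, and since $u$ itself is not a leaf, every neighbour of $u$ must be a leaf. But then $u$ is separated from $K$ inside $H$, contradicting connectedness. Consequently, every non-cycle edge of $H$ is a whisker attached to $K$, and $H$ is either $K$ itself or a whisker graph on $K$. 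The main obstacle I anticipate is exactly this last step, where one must rule out chains of terminal edges drifting away from the cycle by playing terminality off against the connectedness of $H$.
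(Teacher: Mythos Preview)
Your argument is correct and follows the same overall strategy as the paper: reduce to a connected component, show it contains at most one cycle, and then argue that every non-cycle edge is a whisker attached to that cycle. One small correction: the assertion that \emph{all} edges of the shortest path $P$ from $K_1$ to $K_2$ avoid cycles is too strong---an interior edge of $P$ could lie on a third cycle---but you only invoke this for the first edge $v_0v_1$, where it does hold, since $v_0\in V(K_1)$ forces any cycle through $v_0v_1$ to coincide with $K_1$, contradicting $v_1\notin V(K_1)$. With that adjustment your proof goes through; in fact you are more explicit than the paper in handling the acyclic (star) case and in justifying why the non-leaf end of a terminal edge must lie on the cycle.
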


\begin{proof}
Every connected component of $G$ fulfils the same assumption as $G$. Hence it suffices to prove the claim in the case where $G$ is connected. We first prove that $G$ has at most one cycle. Let $T_1$ and $T_2$ be cycles of $G$. Let $a$ be a vertex of $T_1$ and $b$ a vertex of $T_2$, where $a \neq b$. Since $G$ is connected, in $G$ there is a path with endpoints $a$ and $b$, say $L: a = c_0 c_1 \dots c_{n-1} c_n = b$. Then, for all $i=0,\dots,n$, $c_i$ is not a terminal vertex, so that none of the edges of $L$ is a terminal edge. Hence each of them must lie on a cycle. Since the cycles of $G$ are pairwise disjoint, $a c_1$ must lie on $T_1$. Let $k$ be the maximum index such that the edge $c_k c_{k+1}$ of $L$ lies on $T_1$. If $k=n-1$, then $b$ lies on $T_1$, whence $T_1=T_2$. So assume that $k \leq n-2$. Then $c_{k+1} c_{k+2}$ is contained in a cycle distinct from $T_1$. But this is impossible, because $c_{k+1}$ lies on $T_1$ and the cycles of $G$ are pairwise disjoint. This shows that $G$ has at most one cycle. Let $H$ be this cycle, and suppose that $H\neq G$. Since none of the edges of $G\setminus H$ belongs to a cycle, these are all terminal edges. Since $G$ is connected, they all have an endpoint on $H$. But then $G$ is a whisker graph on a cycle.
\end{proof}

\begin{lem}\label{decomposition}
Let $G$ be a graph with pairwise disjoint cycles and in which there is at least one edge that is not terminal and does not belong to a cycle. Then there are two non-empty subgraphs $G_1$ and $G_2$ of $G$ that are connected through this edge.
\end{lem}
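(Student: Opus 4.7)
The plan is to split $G$ along the distinguished edge $e = x_1x_2$, using the standard fact that an edge fails to lie on any cycle precisely when it is a bridge. I would first recall this equivalence: if $e$ lay on a cycle $\Gamma$, then $\Gamma \setminus \{e\}$ would provide a path from $x_1$ to $x_2$ in $G \setminus \{e\}$, so $e$ would not be a bridge; conversely, any path from $x_1$ to $x_2$ in $G \setminus \{e\}$ completed by $e$ yields a cycle through $e$. Since by hypothesis $e$ lies on no cycle, $x_1$ and $x_2$ lie in different connected components of $G \setminus \{e\}$.

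Next, I would let $G_1$ be the connected component of $x_1$ in $G \setminus \{e\}$ and let $G_2$ be the union of all remaining components of $G \setminus \{e\}$, each taken as an induced subgraph. By construction $V(G_1) \cap V(G_2) = \emptyset$ and $x_2 \in V(G_2)$. Moreover, every edge of $G$ distinct from $e$ has both endpoints inside a single component of $G \setminus \{e\}$, hence lies in exactly one of $G_1$ or $G_2$; therefore $G = G_1 \cup \{x_1x_2\} \cup G_2$, which is the decomposition required by the definition of ``connected through the edge $x_1x_2$''.

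It remains to check that $G_1$ and $G_2$ are non-empty (that is, each contains at least one edge, per the convention fixed earlier that a graph is identified with its edge set). This is exactly where I would invoke the hypothesis that $e$ is not terminal: neither $x_1$ nor $x_2$ is a leaf, so $x_1$ has a neighbour $u \neq x_2$ and $x_2$ has a neighbour $v \neq x_1$ in $G$. The edges $x_1u$ and $x_2v$ both persist in $G \setminus \{e\}$, forcing $u \in V(G_1)$ and $v \in V(G_2)$; hence $x_1u$ is an edge of $G_1$ and $x_2v$ is an edge of $G_2$.

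I do not anticipate any real obstacle here: the statement is essentially an unpacking of the bridge/cycle characterization together with the non-terminality of $e$. It is worth noting in passing that the hypothesis on pairwise disjoint cycles plays no role in this particular argument, so the lemma holds in greater generality; only the fact that the chosen edge is both non-terminal and not contained in any cycle is actually used.
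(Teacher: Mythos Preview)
Your proof is correct and follows essentially the same route as the paper: remove the distinguished edge, take $G_1$ to be the connected component of one endpoint and $G_2$ the complement, use the bridge/cycle equivalence to see the endpoints land in different pieces, and invoke non-terminality to furnish an edge in each piece. Your explicit remark that the pairwise-disjoint-cycles hypothesis is unused here is accurate and worth keeping.
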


\begin{center}
\begin{figure}[ht!]
%\psset{unit=0.9cm}
\begin{pspicture}(-1.4,-0.6)(4.8,2.5)
\psline(0,0)(-1,0)
\psline(-1,0)(-0.32,0.73)
\psline[linewidth=1.5pt,linestyle=dotted](1.68,0.73)(0.68,0.73)
\psline(0.68,0.73)(0,0)
\psline(0.68,0.73)(0.19,1.61)
\psline(0.17,2.61)(0.19,1.61)
\psline(0.19,1.61)(-0.72,2.01)
\psline(0.68,0.73)(-0.32,0.73)
\psdots(-1,0)
\psdots(0,0)
\psdots(1,0)
\psdots(1.68,0.73)
\psdots(0.68,0.73)
\psdots(-0.32,0.73)
\psdots(0.19,1.61)
\rput(-1.2,1.3){$G_1$}

\psline[linewidth=1pt,linestyle=dashed](1,0)(1.68,0.73)
\psline[linewidth=1pt,linestyle=dashed](2.55,0.24)(3.43,-0.22)
\psline[linewidth=1pt,linestyle=dashed](3.43,-0.22)(3.45,0.68)
\psline[linewidth=1pt,linestyle=dashed](3.45,0.68)(2.55,0.24)
\psline[linewidth=1pt,linestyle=dashed](2.55,0.24)(1.81,-0.44)
\psline[linewidth=1pt,linestyle=dashed](1.68,0.73)(2.55,0.24)
\psline[linewidth=1pt,linestyle=dashed](1.68,0.73)(1.49,1.71)
\psline[linewidth=1pt,linestyle=dashed](1.68,0.73)(2.51,1.29)
\psline[linewidth=1pt,linestyle=dashed](2.51,1.29)(2.68,2.28)
\psline[linewidth=1pt,linestyle=dashed](2.51,1.29)(3.3,1.91)
\psline[linewidth=1pt,linestyle=dashed](2.51,1.29)(3.51,1.26)
\psline[linewidth=1pt,linestyle=dashed](1.49,1.71)(0.82,2.46)
\psline[linewidth=1pt,linestyle=dashed](0.82,2.46)(1.91,2.62)
\psline[linewidth=1pt,linestyle=dashed](1.91,2.62)(1.49,1.71)
\psline[linewidth=1pt,linestyle=dashed](3.43,-0.22)(4.33,0.22)
\psline[linewidth=1pt,linestyle=dashed](3.43,-0.22)(4.35,-0.63)
\psdots(-0.72,2.01)
\psdots(0.17,2.61)
\psdots(1.49,1.71)
\psdots(2.51,1.29)
\psdots(2.55,0.24)
\psdots(1.81,-0.44)
\psdots(3.43,-0.22)
\psdots(3.45,0.68)
\psdots(2.68,2.28)
\psdots(3.3,1.91)
\psdots(3.51,1.26)
\psdots(0.82,2.46)
\psdots(1.91,2.62)
\psdots(4.33,0.22)
\psdots(4.35,-0.63)
\psdots(1.68,0.73)
\rput(4,2){$G_2$}

\end{pspicture}
\caption{} \label{G1G2}
\end{figure}
\end{center}
\noindent {\small\textit{In Figure \ref{G1G2} the edges of $G_2$ are dashed and the dotted line is the edge connecting $G_1$ and $G_2$.}}
\begin{proof}
Let $ax$ be an edge of $G$ that is not terminal and does not belong to a cycle. Let $\overline{G} = G \setminus \{ax\}$, and let $G_1$ be the connected component of $a$ in $\overline{G}$. Since $a$ is not a terminal vertex, i.e., it has a neighbour other than $x$, $G_1$ is not empty. Moreover, set $G_2 = \overline{G} \setminus G_1$. Then $G_1$ and $G_2$ are vertex-disjoint, and $G=G_1\cup\{ax\}\cup G_2$. Since $ax$ does not belong to a cycle of $G$, the vertices $a$ and $x$ are not connected in $\overline{G}$, whence $x$ is not a vertex of $G_1$, i.e., it is a vertex of $G_2$. Since $x$ is not a terminal vertex, it has a neighbour $b \neq a$. But $b$ cannot be connected to $a$ in $\overline{G}$, so that $b$ is a vertex of $G_2$, i.e., $bx$ is an edge of $G_2$. This proves that $G_2$ is not empty, and completes the proof.
\end{proof}

\begin{thm}\label{main}
Let $G$ be a graph with pairwise disjoint cycles. Let $n$ be the number of its cycles. Then
\[
\ara I(G) \leq \bight I(G) + n.
\]
\end{thm}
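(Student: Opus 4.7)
I would argue by strong induction on the number of edges of $G$. If $G$ has no edges, then $\ara I(G) = \bight I(G) = 0$ and $n=0$, so the inequality holds trivially. Assume $G$ has at least one edge and that the theorem holds for all graphs with pairwise disjoint cycles and strictly fewer edges.

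I would then split the inductive step into two structural situations. First, if every edge of $G$ is terminal or lies on a cycle, Lemma \ref{fundamental} decomposes $G$ into connected components, each of which is a star, a single cycle, or a whisker graph on a cycle. The edge ideals of different connected components live in disjoint variable sets, so the invariants $\ara$, $\bight$, and the cycle count $n$ are additive over connected components, and it suffices to verify the bound on a single component. For a star ($n=0$) the identity $\ara = \bight$ is classical; for a single cycle ($n=1$) the inequality $\ara I(C_m) \leq \bight I(C_m) + 1$ follows from the results in \cite{BKMY12}; and for a whisker graph on a cycle ($n=1$) the equality $\ara = \bight$ is the second author's theorem \cite{M13a} (see also \cite{M13c}). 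Summing over components gives the desired bound on $G$.

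Otherwise $G$ has an edge $ax$ which is neither terminal nor on any cycle. By Lemma \ref{decomposition}, $G = G_1 \cup \{ax\} \cup G_2$ with $G_1, G_2$ nonempty vertex-disjoint subgraphs, $a \in V(G_1)$ and $x \in V(G_2)$. Both $G_i$ inherit pairwise disjoint cycles and have strictly fewer edges than $G$, so the inductive hypothesis supplies polynomials $q_1, \ldots, q_{r_1}$ generating $I(G_1)$ up to radical and $p_1, \ldots, p_{r_2}$ generating $I(G_2)$ up to radical, with $r_i \leq \bight I(G_i) + n_i$ and $n_1 + n_2 = n$. The plan is to produce $r_1 + r_2$ polynomials whose radical is $I(G)$ by absorbing the bridge monomial $ax$ into one of the inductive generators---replacing, say, $q_1$ by $q_1 + ax$ with $q_1$ chosen to involve a suitable neighbor of $a$ in $G_1$, then using the disjointness of $V(G_1)$ and $V(G_2)$ to verify that the radical is preserved. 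To close the estimate I would invoke Lemma \ref{lemma1.2}, whose hypotheses on the maximum minimal vertex covers at $a$ and $x$ yield $\bight I(G) = \bight I(G_1) + \bight I(G_2)$.

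When these hypotheses of Lemma \ref{lemma1.2} do not hold directly, I would apply Lemma \ref{lemma3} at the vertex $x$: outcomes $(a)$, $(c)$, or $(d)$ produce a strictly smaller graph with $\bight$ decreased by one, absorbing the cost of the bridge edge, while outcomes $(b)$ or $(e)$, together with Corollary \ref{remark}, restore the hypothesis of Lemma \ref{lemma1.2}$(ii)$ by placing $x$ in every maximum minimal vertex cover of some structural component. The hard part is this merging step: coordinating the algebraic absorption of $ax$ into a single inductive generator---which requires carefully choosing the generators of $I(G_1)$ and $I(G_2)$ to have the right monomial structure---with the combinatorial control on maximum minimal vertex covers provided by Lemmas \ref{lemma1.2} and \ref{lemma3}. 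Every reduction strictly decreases the edge count, so the induction terminates.
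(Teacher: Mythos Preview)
Your overall plan---induction on the number of edges, decomposition via Lemma \ref{decomposition}, and control of $\bight$ via Lemmas \ref{lemma1.2} and \ref{lemma3}---matches the paper's architecture. Your induction basis is different from the paper's (the paper first uses Lemma \ref{GGhat} to eliminate degree-$2$ cycle vertices and then invokes the fully whiskered case from \cite{M13b}, whereas you invoke Lemma \ref{fundamental} and the cycle/whisker-on-cycle results directly), but that difference is harmless.

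The genuine gap is the algebraic ``absorption'' step. You propose to replace an inductive generator $q_1$ by $q_1+ax$, ``with $q_1$ chosen to involve a suitable neighbour of $a$''. But the induction hypothesis gives you no control over the structure of the $q_i$: it only says $\sqrt{(q_1,\dots,q_{r_1})}=I(G_1)$, not that any $q_i$ is a monomial or has any prescribed shape. Even granting that $q_1$ involves a neighbour of $a$, there is no mechanism to recover both $q_1$ and $ax$ from $q_1+ax$ using only $q_2,\dots,q_{r_1},p_1,\dots,p_{r_2}$, since the variables of $G_1$ and $G_2$ are disjoint and $a$ is shared with $q_1$. So the claim $\ara I(G)\leq r_1+r_2$ is unsupported, and without it the inequality $\bight I(G)=\bight I(G_1)+\bight I(G_2)$ from Lemma \ref{lemma1.2} is one short of what you need.

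The paper's mechanism is different and worth contrasting. It never tries to absorb $ax$; instead it \emph{keeps} $ax$ inside one of the pieces (working with $\widehat{G}'_1=\widehat{G}_1\cup\{ax\}$) and earns the savings elsewhere: one removes $h_1$ neighbour sets $W_i$ from $G_1$ and $h_1$ neighbour sets $U_i$ from $G_2$ simultaneously, chosen via the big-height conditions of Lemma \ref{lemma3}, and then pairs the removed edges as $aw_i+xu_i$. The identity $(aw_i)^2=aw_i(aw_i+xu_i)-w_iu_i\cdot ax$ (and symmetrically for $xu_i$) shows these binomials suffice up to radical precisely because $ax$ sits in $I(\widehat{G}'_1)$. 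Matching this algebraic saving of $h_1$ generators against the combinatorial drop in $\bight$ is what forces the elaborate Case 1/2/3 analysis in the paper; your one-paragraph sketch of how Lemma \ref{lemma3}'s outcomes interact does not yet carry that weight.
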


\begin{proof}
Suppose that a graph $G$ with pairwise disjoint cycles has a cycle in which at least one vertex $x$ has degree 2. Call $y_1$ and $y_2$ the neighbours of $x$, which lie on the same cycle. Let $L$ be the graph obtained by replacing the edges $xy_1$ and $xy_2$ with $x_1y_1$ and $x_2y_2$, where $x_1$ and $x_2$ are new distinct vertices, both terminal. Then $G$ is obtained from $L$ by gluing together the leaves $x_1$ and $x_2$, i.e., with respect to the notation used in Lemma \ref{GGhat}, $G=\dot{L}$. Moreover, the cycles of $L$ are still pairwise disjoint, and, if $n$ is the number of cycles in $G$, the number of cycles in $L$ is $n-1$. Suppose that the claim of the theorem is true for $L$. Then it is also true for $G$, since, in view of Lemma \ref{GGhat},
$$\ara I(G)\leq \ara I(L)\leq\bight I(L)+n-1\leq\bight I(G)+n.$$
Hence, by descending induction on the number of cycles containing some vertex of degree 2,  it suffices to prove the claim in the case where $G$ has no such cycles.
\\ Now suppose that, in addition to this condition, every edge of $G$ that is not terminal and does not belong to a cycle has at least one whisker at each endpoint. In this case it is straightforward to verify that $G$ is a fully whiskered graph. Then the claim is known to be true, since, according to \cite{M13b}, Corollary 4.2, in this case we have $\ara I(G)=\bight I(G)$.
\\ Hence we may assume that
\begin{itemize}
\item[$(i)$] no vertex lying on a cycle of $G$ has degree 2;
\item[$(ii)$] there is an edge of $G$ that is not terminal, does not belong to a cycle, and that has no whisker attached to one of its endpoints.
\end{itemize}
 We want to prove the theorem by induction on the number of edges of $G$. Let $ax$ be any edge of $G$ that is not terminal and does not belong to a cycle, and consider the decomposition  $G=G_1\cup\{ax\}\cup G_2$ described in the proof of Lemma \ref{decomposition} (and depicted in Figure \ref{G1G2}). We may assume that one of the endpoints of $ax$ has no whisker attached. For all $i\in\{1,2\}$, let $G'_i= G_i\cup\{ax\}$.  Obviously, in $G_1$, in $G_2$, in $G'_1$ and $G'_2$ the cycles are pairwise disjoint. Hence induction applies to all these graphs. If a graph does not fulfil condition $(i)$, it can always be reduced to a graph fulfilling $(i)$ by means of the above construction, which leaves the number of edges unchanged. On the other hand, in view of Lemma \ref{fundamental}, a graph fulfilling the above condition $(i)$ but not $(ii)$ is fully whiskered (this includes the case of a star, which is a whisker graph on an isolated vertex). This provides the induction basis. \\
In many cases considered in this proof the induction step will be performed as follows. We show that $G$ can be decomposed as the union of two non-empty graphs $A$ and $B$ having exactly one vertex in common and such that for some maximum minimal vertex cover $D$ of $A$ and $E$ of $B$, $D$ and $E$ are disjoint and $D \cup E$ is a minimal vertex cover of $G$. Then $I(G) = I(A) + I(B)$, so that $\ara I(G) \leq \ara I(A) + \ara I(B)$, and, moreover, $\bight I(G) \geq \bight I(A) + \bight I(B)$. On the other hand, if $r$ and $s$ are the numbers of cycles contained in $A$ and $B$, respectively, then $n=r+s$ and, by induction, $\bight I(A) \geq \ara I(A) - r$ and $\bight I(B) \geq \ara I(B) - s$, so that
\[
\bight I(G) \geq \ara I(A) - r + \ara I(B) - s \geq \ara I(G) - n,
\]
whence the claim follows. In the first part of the proof, the graphs $A$ and $B$ will coincide with $G_1$, $G_2$ or with $G'_1$, $G_2$, or with $G_1$, $G'_2$.
\\ Let $C_1$ and $C_2$  be maximum minimal vertex covers of $G_1$ and $G_2$, respectively.   Moreover, let $n_1$ be the number of cycles contained in $G_1$ and $n_2$ be the number of cycles contained in $G_2$. Then $n=n_1+n_2$ is the number of cycles contained in $G$.
\\
If $a\in C_1$ for all choices of $C_1$, then by Lemma \ref{lemma1.2} $(ii)$, $C_1\cup C_2$ is a maximum minimal vertex cover of $G$.
\\
Similarly, if $x\in C_2$ for all choices of $C_2$, then  $C_1\cup C_2$ is a maximum minimal vertex cover of $G$.\\
Now suppose that, for some choice of $C_1$ and $C_2$, we have  $a \notin C_1$ and $x \notin C_2$.  In this case, in view of Lemma \ref{lemma1}, $C'_1=C_1\cup\{ x\}$ is a maximum minimal vertex cover of $G'_1$ and $C'_2=C_2\cup\{a\}$ is a maximum minimal vertex cover of $G'_2$. In particular we have
\begin{eqnarray}\label{seven}
\bight I(G'_1)=\bight I(G_1)+1,\ \mbox{ and }\ \bight I(G'_2)=\bight I(G_2)+1.
\end{eqnarray}
In the sequel, let $C_1$ denote any maximum minimal vertex cover of $G_1$ such that $a\notin C_1$ and $C_2$ any maximum minimal vertex cover of $G_2$ such that $x\notin C_2$.
If, for some choice of $C_1$, $a$ has no redundant neighbours in $C_1$, then $C_1\cup C'_2$ is a minimal vertex cover of $G$ (see Figure \ref{case1}). Similarly, if, for some choice of $C_2$, $x$ has no redundant neighbours in $C_2$, then $C'_1\cup C_2$ is a minimal vertex cover of $G$.
\\ So assume that for all choices of  $C_1$ and $C_2$, $a$ and $x$ have some redundant neighbour. Then $G_1$ and $G_2$ both fulfil the assumption of Lemma \ref{lemma3}, with respect to the neighbours of $a$ and $x$, respectively. Hence one of conditions $(a)-(e)$ is true for $G_1$ and the same applies to $G_2$.
\begin{center}
\begin{figure}[ht!]
\begin{pspicture}(-1.9,-1)(2,1)
\psline(0,0)(1,0)
\psline[linewidth=1pt,linestyle=dashed](1,0)(1.71,0.71)
\psline[linewidth=1pt,linestyle=dashed](1,0)(1.71,-0.71)
\psline(-0.71,0.71)(-1.41,0)
\psline(-1.41,0)(-0.71,-0.71)
\psline(0,0)(-0.71,0.71)
\psline(-0.71,-0.71)(0,0)
\psdots[dotsize=3pt 3,dotstyle=Bo](0,0)
\psdots(1,0)
\psdots[dotsize=3pt 3,dotstyle=Bo](1.71,0.71)
\psdots[dotsize=3pt 3,dotstyle=Bo](1.71,-0.71)
\psdots[dotsize=3pt 3,dotstyle=Bo](-0.71,-0.71)
\psdots[dotsize=3pt 3,dotstyle=Bo](-0.71,0.71)
\psdots(-1.41,0)
\uput[290](0,0){$a$}
\uput[250](1,0){$x$}
\rput(-1.5,0.5){$G_1$}
\rput(2.3,0.5){$G_2$}
\end{pspicture}
\caption{} \label{case1}
\end{figure}
\end{center}
\noindent {\small\textit{In Figure \ref{case1} the edges of $G_2$ are dashed.}}
\\\\
Let $C'_1$ and $C'_2$ be arbitrary maximum minimal vertex covers of $G'_1$ and $G'_2$, respectively.
\\ Now, if $a\in C'_1$ for some choice of $C'_1$, then $C'_1$ would be a vertex cover of $G_1$, but, in view of \eqref{seven}, not a minimal one. Then, by Lemma \ref{minimal_induced} $(ii)$, $C'_1\setminus\{a\}$ would be a minimal vertex cover of $G_1$, maximum by \eqref{seven}. But this cover does not contain any redundant neighbours of $a$, because otherwise $C'_1$ would not be minimal. This contradicts our present assumption. Thus $a$ does not belong to $C'_1$, for all choices of $C'_1$. Similarly, $x$ does not belong to $C'_2$, for all choices of $C'_2$. This implies that $x\in C'_1$ for all choices of $C'_1$ and $a\in C'_2$ for all choices of $C'_2$.
\\ In the sequel we will use the fact that $G_1$ and $G_2$ are interchangeable, as are $a$ and $x$, $G'_1$ and $G'_2$.
\\\\ {\bf Case 1} First suppose that $G_2$ fulfils $(b)$ with respect to the free neighbour $y$ of $x$. Set $\overline{G}_2=G_2\setminus\{xy\}$, let $\overline{H}$ be the connected component of $y$ in $\overline{G}_2$ and set $\overline{K}=\overline{G}_2\setminus\overline{H}$ (see Figure \ref{case3.4.5}).
Consider the graph $G_1^{\ast} = G'_1 \cup \overline{K}$.

\begin{center}
\begin{figure}[ht!]
\begin{pspicture}(-1.5,-1.7)(4,1.6)
\psline(0,0)(-0.87,0.5)
\psline(-0.87,0.5)(-0.87,-0.5)
\psline(-0.87,-0.5)(0,0)
\psline(0,0)(1,0)
\psline(0,0)(0,1)
\psline[linecolor=gray](1,0)(2,0)
\psline[linewidth=1pt,linestyle=dashed](1,0)(1.71,-0.71)
\psline[linestyle=dashed](1.71,-0.71)(1.97,-1.67)
\psline[linestyle=dashed](1.97,-1.67)(2.67,-0.97)
\psline[linestyle=dashed](2.67,-0.97)(1.71,-0.71)
\psline[linewidth=1.5pt,linestyle=dotted](2,0)(3,0)
\psline[linewidth=1.5pt,linestyle=dotted](3,0)(4,0)
\psline[linewidth=1.5pt,linestyle=dotted](2,0)(2.87,0.5)
\psline[linewidth=1.5pt,linestyle=dotted](2.87,0.5)(3.73,1)
\psdots(0,0)
\psdots[dotsize=3pt 3,dotstyle=Bo](1,0)
\psdots[dotsize=3pt 3,dotstyle=Bo](2,0)
\psdots[dotsize=3pt 3,dotstyle=Bo](-0.87,-0.5)
\psdots[dotsize=3pt 3,dotstyle=Bo](-0.87,0.5)
\psdots[dotsize=3pt 3,dotstyle=Bo](0,1)
\psdots(1.71,-0.71)
\psdots[dotsize=3pt 3,dotstyle=Bo](2.67,-0.97)
\psdots[dotsize=3pt 3,dotstyle=Bo](1.97,-1.67)
\psdots(3,0)
\psdots[dotsize=3pt 3,dotstyle=Bo](4,0)
\psdots(2.87,0.5)
\psdots[dotsize=3pt 3,dotstyle=Bo](3.73,1)
\uput[290](0,0){$a$}
\uput[250](1,0){$x$}
\uput[290](2,0){$y$}
\uput[90](0,1){$w$}
\uput[120](-0.87,0.5){$v_1$}
\rput(-1.5,0){$G'_1$}
\rput(2,1){$G_2$}
\rput(4,0.5){$\overline{H}$}
\rput(1.3,-1.1){$\overline{K}$}
\end{pspicture}
\caption{} \label{case3.4.5}
\end{figure}
\end{center}
\noindent {\small\textit{In Figure \ref{case3.4.5} the edges of $G'_1$ are thick lines, the edges of $\overline{H}$ are dotted lines, the edges of $\overline{K}$ are dashed lines and the only edge of $G_2$ that does not belong to $\overline{H}$ and $\overline{K}$ is $xy$ and is a thin grey line.}}
\\\\
Let $\overline{D}$ be a maximum minimal vertex cover of $\overline{K}$.  In $G_1^\ast$, the subgraphs $G'_1$ and $\overline{K}$ have only the vertex $x$ in common, and, moreover, $x$ belongs to all maximum minimal vertex covers of $G'_1$ and $\overline{K}$. Hence, by Lemma \ref{lemma1.1},  $C_1^\ast=C'_1\cup\overline{D}$ is a maximum minimal vertex cover of $G_1^\ast$ and $x$ belongs to all maximum minimal vertex covers of $G_1^\ast$.
But then, according to Lemma \ref{lemma1}, $C_1^\ast$ is a maximum minimal vertex cover of $G_1^\ast\cup\{xy\}$, as well. Hence, if $\overline{E}$ is a maximum minimal vertex cover of $\overline{H}$, $C_1^\ast\cup\overline{E}$ is a minimal vertex cover of $G=(G_1^\ast\cup\{xy\})\cup\overline{H}$. Thus, if $\overline{H}$ is not empty,  the claim follows by induction applied to $G_1^\ast\cup\{xy\}$ and $\overline{H}$.\\
Now suppose that $\overline{H}$ is empty. In this case $xy$ is a terminal edge of $G_2$. By Lemma \ref{lemma1}, $\overline{D}$ is a maximum minimal vertex cover of $\overline{K}\cup\{xy\}$, which, in this case, is the whole graph $G_2$. Since $x\in \overline{D}$, $\overline{D}$ is also a minimal vertex cover of $G'_2$. Consequently, $C_1 \cup \overline{D}$  is a minimal vertex cover of $G = G_1 \cup G'_2$, whence
\[
\bight I(G) \geq |C_1| + |\overline{D}|.
\]
If $G_1$ fulfils condition $(b)$, with respect to some free neighbour $w$ of $a$, then certainly $w$ is not a terminal vertex, because we are assuming that the edge $ax$ is not whiskered at both endpoints. Hence, after exchanging $G_1$ and $G_2$, we are taken back to the case in which $\overline{H}$ is not empty. So suppose that  $G_1$ fulfils condition $(a)$, $(c)$, $(d)$ or $(e)$. In view of Case 2 below, we only have to consider the first three cases. First suppose that $G_1$ fulfils $(a)$, with respect to a redundant free neighbour $w$ of $a$. Set $\overline{G}_1 = G_1 \setminus \{aw\}$ (see Figure \ref{case3.4.6}). Then $\bight I(\overline{G}_1) = \bight I(G_1) - 1$. Moreover, let $\overline{K}' = \overline{K} \cup \{ax\}$. Then, by Lemma \ref{lemma1}, $\overline{D}$ is a maximum minimal vertex cover of $\overline{K}'$. Finally, set	
\[
\overline{G} = \overline{G}_1 \cup \overline{K}' = G \setminus \{xy, aw\}.
\]

\begin{center}
\begin{figure}[ht!]
\begin{pspicture}(-1.5,-1.7)(2.7,0.7)
\psline(0,0)(-0.87,0.5)
\psline(-0.87,0.5)(-0.87,-0.5)
\psline(-0.87,-0.5)(0,0)
\psline(0,0)(1,0)
\psline[linestyle=dashed](1,0)(1.71,-0.71)
\psline[linestyle=dashed](1.71,-0.71)(1.97,-1.67)
\psline[linestyle=dashed](1.97,-1.67)(2.67,-0.97)
\psline[linestyle=dashed](2.67,-0.97)(1.71,-0.71)
\psdots(0,0)
\psdots[dotsize=3pt 3,dotstyle=Bo](-0.87,-0.5)
\psdots[dotsize=3pt 3,dotstyle=Bo](-0.87,0.5)
\psdots[dotsize=3pt 3,dotstyle=Bo](1,0)
\psdots(1.71,-0.71)
\psdots[dotsize=3pt 3,dotstyle=Bo](2.67,-0.97)
\psdots[dotsize=3pt 3,dotstyle=Bo](1.97,-1.67)
\uput[290](0,0){$a$}
\uput[250](1,0){$x$}
\uput[120](-0.87,0.5){$v_1$}
\rput(-1.5,0){$\overline{G}_1$}
\rput(1.5,0.5){$\overline{G}$}
\rput(1.3,-1.1){$\overline{K}$}
\end{pspicture}
\caption{}\label{case3.4.6}
\end{figure}
\end{center}
\noindent {\small\textit{In Figure $\ref{case3.4.6}$ the edges of $\overline{G}_1$ are thick lines.}}
\\\\ Then $I(G) = I(\overline{G}) + (xy,aw)$, whence $I(G) = \sqrt{I(\overline{G}) + (xy+aw)}$, because $ax \in I(\overline{G})$ and $x^2y^2=xy(xy+aw)-axyw$, so that
\[
\ara I(G) \leq \ara I(\overline{G}) + 1.
\]
Now, induction applies to $\overline{G}_1$ and $\overline{K}'$, ($\overline{G}_1$ may be empty) so that
\begin{eqnarray*}
\bight I(G) - 1 &\geq& |C_1| - 1 + |\overline{D}|\\
&=& \bight I(\overline{G}_1) + \bight I(\overline{K}')\\
&\geq& \ara I(\overline{G}_1)-n_1 + \ara I(\overline{K}') - n_2\\
&\geq& \ara I(\overline{G}) - n\\
&\geq& \ara I(G) - n - 1,
\end{eqnarray*}
whence the desired inequality for $G$.
\\ \noindent Now suppose that $(c)$ holds for $G_1$ with respect to a redundant non-free neighbour $v_1$ of $a$. Set $\overline{G}_1 = G_1 \setminus \{av_1\}$. Then $\bight I(\overline{G}_1)=\bight I(G_1)-1$ and the number of cycles of $\overline{G}_1$ is $n_1-1$. Then the same computation as above yields the desired inequality.
\\ Finally, suppose that $(d)$ holds for $G_1$ with respect to $v_1$. Let $v_2$ be the other non-free redundant neighbour of $x$,  set $\widetilde{G}_1=G_1\setminus\{av_1, av_2\}$. Then  $\bight I(\widetilde{G}_1)=\bight I(G_1)-1$ and the number of cycles of $\widetilde{G}_1$ is $n_1-1$. Let
\[
\widetilde{G} = \widetilde{G}_1 \cup \overline{K}' = G \setminus \{xy, av_1, av_2\}.
\]
		
\noindent Then $I(G) = I(\widetilde{G}) + (xy,av_1, av_2)$, whence $I(G) = \sqrt{I(\widetilde{G}) + (xy+av_1, av_2)}$,  and thus
\[
\ara I(G) \leq \ara I(\widetilde{G}) + 2.
\]
Now, induction applies to $\widetilde{G}_1$ and $\overline{K}'$,  so that
\begin{eqnarray*}
\bight I(G) - 1 &\geq& |C_1| - 1 + |\overline{D}|\\
&=& \bight I(\widetilde{G}_1) + \bight I(\overline{K}')\\
&\geq& \ara I(\widetilde{G}_1)-n_1 +1 + \ara I(\overline{K}') - n_2\\
&\geq& \ara I(\widetilde{G}) - n+1\\
&\geq& \ara I(G) - n - 1,
\end{eqnarray*}
which, again, yields the desired inequality for $G$.		
\\[3mm]
{\bf Case 2} Now suppose that $(e)$ applies to $G_2$, with respect to some non-free neighbour $z_1$ of $x$ in $C_2$. Let $z_2$ be the other non-free neighbour of $x$. Set $\overline{G}_2 = G_2 \setminus \{xz_1\}$. Then induction applies to $\overline{G} = G'_1 \cup \overline{G}_2$, so that $\bight I(\overline{G}) \geq \ara I(\overline{G}) - n+1$, since $n-1$ is the number of cycles contained in $\overline{G}$. On the other hand, since $I(G) = I(\overline{G}) + (xz_1)$, we also have that $\ara I(G) \leq \ara I(\overline{G}) + 1$. Let $\widetilde{H}$ be the connected component of $z_1$ in $\widetilde{G}_2 = G_2 \setminus \{xz_1, xz_2\}$ and set $\widetilde{K} = \widetilde{G}_2 \setminus \widetilde{H}$. Let $C'_1$ be a maximum minimal vertex cover of $G'_1$ (whence $x \in C'_1$), $\widetilde{D}$  a maximum minimal vertex cover of $\widetilde{K}$, and $\widetilde{E}$  a maximum minimal vertex cover of $\widetilde{H}$. Since $x$ is the only common vertex of $G'_1$ and $\widetilde{K}$, and $x$ belongs to all maximum minimal vertex covers of $G'_1$ and of $\widetilde{K}$, by Lemma \ref{lemma1.1} we have that $C'_1 \cup \widetilde{D}$ is a maximum minimal vertex cover of $G'_1\cup\widetilde{K}$ and $x$ belongs to all maximum minimal vertex covers of this graph. On the other hand, $G'_1\cup\widetilde{K}$ is vertex-disjoint from $\widetilde{H}$,  and $\overline{G}$ is obtained by connecting  $G_1'\cup\widetilde{K}$ and $\widetilde{H}$ through  the edge $xz_2$. Hence, by Lemma \ref{lemma1.2} $(ii)$, $\overline{C}=C'_1 \cup \widetilde{D}\cup\widetilde{E}$ is a  maximum minimal vertex cover of $\overline{G}$.  Moreover, it is a minimal vertex cover of $G$. Therefore $\bight I(G) \geq \bight I(\overline{G})$.  Finally, induction applies to $\overline{G}$. Summing up, we have
\[
\bight I(G) \geq \bight I(\overline{G}) \geq \ara I(\overline{G}) - n+1 \geq  \ara I(G) - n.
\]
\\[1mm]
The cases where $G_1$ fulfils $(b)$ or  $(e)$ can be treated in the same way as in Cases 1 and 2 above.
\\[3mm]
{\bf Case 3} Finally, suppose that each of $G_1$ and $G_2$ fulfils $(a)$ or $(c)$ or $(d)$. \\
A subset $W$ of $G_1$ will be called a \textit{neighbour set} of $a$ if $W=\{aw\}$ for some neighbour $w$ of $a$ or $W=\{av_1, av_2\}$ where $v_1, v_2$ are non-free neighbours of $a$.
Set
\[
h_1 = \max \left\{ h\ \Bigg|
\begin{tabular}{ccc}
$\exists W_1,\dots, W_h$ neighbour sets of $a$ in $G_1$ such that,\\
for all $\ell=1,\dots,h$, $\bight I(G_1 \setminus\bigcup_{i=1}^\ell W_i) = \bight I(G_1) - \ell$
\end{tabular}
\right\}.
\]
Consider
\[
\widehat{G}_1 = G_1 \setminus\bigcup_{i=1}^{h_1} W_i.
\]
Then $h_1\geq 1$. In view of Lemma \ref{lemma3}, the maximality of $h_1$ implies that either $\widehat{G}_1$ is empty or one of the following conditions holds.
\begin{itemize}
\item[$(i)$] All maximum minimal vertex covers of $\widehat{G}_1$ contain $a$ (and are thus maximum minimal vertex covers of $\widehat{G}'_1 = \widehat{G}_1 \cup \{ax\}$, as well).
\vspace{2mm}
\\ In the remaining cases, there is a maximum minimal vertex cover $\widehat{C}_1$ of $\widehat{G}_1$ such that $a \notin \widehat{C}_1$. In view of Lemma \ref{lemma1}, this implies that $\bight I(\widehat{G}_1')=\bight I(\widehat{G}_1)+1$.
\vspace{2mm}
\item[$(ii)$] For some maximum minimal vertex cover $\widehat{C}_1$ of $\widehat{G}_1$ such that $a \notin \widehat{C}_1$, no neighbour of $a$ is redundant. In this case $\widehat{C}_1 \cup \{a\}$ is a maximum minimal vertex cover of $\widehat{G}'_1$.
\\[2mm] In the remaining cases, for all maximum minimal vertex covers $\widehat{C}_1$ of $\widehat{G}_1$ such that $a \notin \widehat{C}_1$, there is some redundant neighbour of $a$ with respect to $\widehat{C}_1$. This implies that $a$ does not belong to any maximum minimal vertex cover of $\widehat{G}'_1$.
\vspace{2mm}
\item[$(iii)$] For all maximum minimal vertex covers $\widehat{C}_1$ of $\widehat{G}_1$ such that $a \notin \widehat{C}_1$, in $\widehat{C}_1$ there is some redundant free neighbour of $a$. Then, by Lemma \ref{lemma3}, there is  such a neighbour $w$ for which the following holds. Set $\widehat{G}_1^{\bullet} = \widehat{G}_1 \setminus \{aw\}$, call $\overline{H}$ the connected component of $w$ in $\widehat{G}_1^{\bullet}$, and set $\overline{K} = \widehat{G}_1^{\bullet} \setminus \overline{H}$. Then $a$ belongs to all maximum minimal vertex covers of $\overline{K}$.
\item[$(iv)$] For some maximum minimal vertex cover $\widehat{C}_1$ of $\widehat{G}_1$ such that $a \notin \widehat{C}_1$, there are no redundant free neighbours of $a$ with respect to $\widehat{C}_1$, but there is a redundant non-free neighbour $v_1$ such that the following holds. Call $v_2$ the other non-free neighbour of $a$, and set $\widehat{G}_1^\vee = \widehat{G}_1 \setminus \{av_1,av_2\}$, call $\widetilde{H}$ the connected component of $v_1$ in $\widehat{G}_1^\vee$, and set $\widetilde{K} = \widehat{G}_1^\vee \setminus \widetilde{H}$. Then $a$ belongs to all maximum minimal vertex covers of $\widetilde{K}$.
\end{itemize}
\vspace{2mm}
Define $h_2$ for $G_2$, in the same way as $h_1$ for $G_1$, i.e., set
\[
h_2 = \max \left\{ h\ \Bigg|
\begin{tabular}{ccc}
$\exists U_1,\dots,U_h$ neighbour sets of $x$ in $G_2$ such that,\\
for all $\ell=1,\dots,h$, $\bight I(G_2 \setminus \bigcup_{i=1}^\ell U_i) = \bight I(G_2) - \ell$
\end{tabular}
\right\}.
\]
Suppose that $h_1\leq h_2$, and then set
\[
\widehat{G}_2 = G_2 \setminus \bigcup_{i=1}^{h_1} U_i.
\]
In the sequel we will admit that $\widehat{G}_2$ may be empty.
\\ We have
\[
I(G) = I(\widehat{G}'_1 \cup \widehat{G}_2) + I(W_1\cup\cdots\cup W_{h_1}\cup U_1\cup\cdots\cup U_{h_1}).
\]
Note that at most one of the sets $W_i$ and at most one of the sets $U_i$ consists of two elements. For all $i=1,\dots, h_1$, let $aw_i\in W_i$ and $xu_i\in U_i$. If some $W_i$ contains another element, call it $\alpha$, otherwise set $\alpha=0$.  If some $U_i$ contains another element, call it $\beta$, otherwise set $\beta=0$.
Then
\[
I(G) = I(\widehat{G}'_1 \cup \widehat{G}_2) + (aw_1,\dots,aw_{h_1}, xu_1,\dots, xu_{h_1}, \alpha, \beta),
\]
so that
\[
I(G) = \sqrt{I(\widehat{G}'_1 \cup \widehat{G}_2) + (aw_1+xu_1,\dots,aw_{h_1}+xu_{h_1}, \alpha+\beta)},
\]
because $ax \in I(\widehat{G}'_1\cup\widehat{G}_2)$. Hence
\[
\ara I(G) \leq \ara I(\widehat{G}'_1 \cup \widehat{G}_2) + \widehat{h}_1 \leq \ara I(\widehat{G}'_1) + \ara I(\widehat{G}_2) + \widehat{h}_1,
\]
where $\widehat{h}_1=h_1$ or $\widehat{h}_1=h_1+1$. In the latter case, at least one cycle is lost when passing from $G$ to $\widehat{G}'_1\cup\widehat{G}_2$, and consequently, if $\widehat{n}$ is the number of cycles of this graph, we have $\widehat{n}\leq n-1$, whereas, in general, $\widehat{n}\leq n$. Thus we always have $\widehat{n}+\widehat{h}_1\leq n+h_1$, whence
\[
h_1-\widehat{n}\geq \widehat{h}_1-n.
\]
Moreover,
\[
\bight I(G_2)=\bight I(\widehat{G}_2)+h_1.
\]
Let $\widehat{n}_1$ be the number of cycles contained in $\widehat{G}'_1$, and $\widehat{n}_2$ be the number of cycles contained in $\widehat{G}_2$, so that $\widehat{n}=\widehat{n}_1+\widehat{n}_2$.  Induction applies to $\widehat{G}'_1$ and $\widehat{G}_2$.  Hence
\[
\bight I(\widehat{G}'_1) \geq \ara I(\widehat{G}'_1) - \widehat{n}_1,
\]
and
\[
\bight I(\widehat{G}_2) \geq \ara I(\widehat{G}_2) - \widehat{n}_2.
\]
In cases $(i)$ and $(ii)$, let $\widehat{C}_1$ be a maximum minimal vertex cover of $\widehat{G}_1$. Then, in case $(i)$, $\widehat{C}'_1=\widehat{C}_1$ is also a maximum minimal vertex cover of  $\widehat{G}'_1$.   In case $(ii)$, $\widehat{C}'_1=\widehat{C}_1\cup\{a\}$ is a maximum minimal vertex cover of $\widehat{G}'_1$. In both cases, $\widehat{C}'_1 \cup C_2$ is a minimal vertex cover of $G$. Moreover, $\widehat{C}'_1$ and $C_2$ are disjoint. Consequently,
\begin{eqnarray*}
\bight I(G) \geq |\widehat{C}'_1| + |C_2| &=& \bight I(\widehat{G}'_1) + \bight I(G_2)\\
&=& \bight I(\widehat{G}'_1) + \bight I(\widehat{G}_2) + h_1\\
&\geq& \ara I(\widehat{G}'_1) - \widehat{n}_1 + \ara I(\widehat{G}_2) - \widehat{n}_2 + h_1\\
&=& \ara I(\widehat{G}'_1)+ \ara I(\widehat{G}_2)+h_1 - \widehat{n} \\
&\geq& \ara I(\widehat{G}'_1) +\ara I(\widehat{G}_2) + \widehat{h}_1-n\\
&\geq& \ara I(G) - n.
\end{eqnarray*}	
Before we examine the remaining cases, one remark is needed. First suppose that all maximum minimal vertex covers $\widehat{C}_2$ of $\widehat{G}_2$ contain $x$. Then we are taken back to case $(i)$ with $\widehat{G}'_1$ replaced by $\widehat{G}'_2=\widehat{G}_2\cup\{ax\}$  and $\widehat{G}_2$ replaced by $\widehat{G}_1$ (note that the maximality of $h_1$ is irrelevant in this part of the argumentation). Then suppose that for some maximum minimal vertex cover $\widehat{C}_2$ of $\widehat{G}_2$ such that $x \notin \widehat{C}_2$, there is no redundant neighbour of $x$ with respect to $\widehat{C}_2$. Then, for all neighbour sets $U$ of $x$ in $\widehat{G}_2$, $\widehat{C}_2$ is also a minimal vertex cover of $\widehat{G}_2 \setminus U$. Thus the elimination of the neighbour set $U$ does not cause the big height to drop. This implies that $h_2 = h_1$. Thus we are taken back to case $(ii)$, with the roles of $\widehat{G}_1$ and $\widehat{G}_2$ exchanged. Hence, in the sequel, we may assume that, for all maximum minimal vertex covers $\widehat{C}_2$ of $\widehat{G}_2$ such that $x \notin \widehat{C}_2$ (and such covers exist), there is some redundant neighbour of $x$ with respect to $\widehat{C}_2$. Hence $x$ does not belong to any maximum minimal vertex cover of $\widehat{G}'_2$. Recall that the same is true for $G'_2$.
\\[1mm]
In case $(iii)$, let $\widehat{C}_1^\bullet$ be a maximum minimal vertex cover of $\widehat{G}_1^\bullet$; $\widehat{G}_1$ fulfils condition $(b)$ with respect to the neighbour $w$ of $a$, so that, by Corollary \ref{remark} $(i)$, $a$ belongs to all maximum minimal vertex covers of $\widehat{G}_1^\bullet$, and, in particular, $a \in \widehat{C}_1^\bullet$. Moreover, $\widehat{C}_1^\bullet$ is a maximum minimal vertex cover of $\widehat{G}_1$. Furthermore, in $\widehat{G}'_1\cup \widehat{G}_2$, the subgraphs  $\widehat{G}_1$ and $\widehat{G}_2$ are connected through the edge $ax$. Recall that $a$ does not belong to any maximum minimal vertex cover of $\widehat{G}'_1$.  Thus, in view of Lemma \ref{lemma1.2} $(i)$, $\widehat{C}_1^\bullet\cup \widehat{C}_2$ is a maximum minimal vertex cover of $\widehat{G}'_1\cup \widehat{G}_2$.
Moreover, $\widehat{C}_1^\bullet\cup C_2$  is a minimal vertex cover of  $G$. Hence, by induction,
\begin{eqnarray*}
\bight I(G) \geq |\widehat{C}_1^\bullet\cup C_2|&=& |\widehat{C}_1^\bullet|+|C_2|= |\widehat{C}_1^\bullet|+|\widehat{C}_2|+h_1\\
&=& \bight I(\widehat{G}'_1\cup\widehat{G}_2) + h_1\\
&\geq& \ara I(\widehat{G}'_1\cup\widehat{G}_2) - \widehat{n} + h_1\\
&\geq& \ara I(G) - n.
\end{eqnarray*}
In case $(iv)$, let $\widehat{C}_1^\vee$ be a maximum minimal vertex cover of $\widehat{G}_1^\vee$. Then, by Corollary \ref{remark} $(ii)$, $a \in \widehat{C}_1^\vee$, and $\widehat{C}_1^\vee$ is also a maximum minimal vertex cover of $\widehat{G}_1^\bullet = \widehat{G}_1 \setminus \{av_1\}$.
\\  If $a$ does not belong to any maximum minimal vertex cover of $\widehat{G}_1^\bullet\cup\{ax\}=\widehat{G}_1'\setminus\{av_1\}$, then by Lemma \ref{lemma1.2} $(i)$, $\widehat{C}_1^\vee \cup \widehat{C}_2$ is a maximum minimal vertex cover of $(\widehat{G}'_1 \setminus\{av_1\})\cup \widehat{G}_2=\widehat{G}_1^\bullet\cup\{ax\}\cup\widehat{G}_2$, since this graph  is obtained by connecting $\widehat{G}_1^\bullet$ and $\widehat{G}_2$ through the edge $ax$. Furthermore, $\widehat{C}_1^\vee \cup C_2$ is a minimal vertex cover of $G$.
\\ Now, induction applies to $(\widehat{G}'_1 \setminus \{av_1\}) \cup \widehat{G}_2$, and the number of cycles contained in this graph is  $\widehat{n}-1$. Hence
\begin{eqnarray*}
\bight I(G) \geq |\widehat{C}_1^\vee \cup C_2| &=& |\widehat{C}_1^\vee \cup \widehat{C}_2| +h_1\\
&=& \bight I((\widehat{G}'_1 \setminus \{av_1\}) \cup \widehat{G}_2) + h_1\\
&\geq& \ara I((\widehat{G}'_1 \setminus \{av_1\}) \cup \widehat{G}_2) - \widehat{n} + 1 + h_1\\
&\geq& \ara I(\widehat{G}'_1 \cup \widehat{G}_2) - 1 - \widehat{n} + 1 + h_1\\
&\geq& \ara I(G) - n.
\end{eqnarray*}
If $a$ belongs to some maximum minimal vertex cover $\widehat{C}_1'$ of $\widehat{G}_1'\setminus\{av_1\}$, then $\widehat{C}_1'\cup C_2$ is a minimal vertex cover of $G$. Moreover, $\widehat{G}'_1\setminus\{av_1\}$ contains $n_1-1=\widehat{n}_1-1$ cycles. Hence, by induction, we have
\begin{eqnarray*}
\bight I(G) &\geq& |\widehat{C}'_1|+|C_2|=|\widehat{C}'_1|+|\widehat{C}_2|+h_1\\
&=& \bight I(\widehat{G}'_1\setminus\{av_1\})+ \bight I(\widehat{G}_2) + h_1\\
&\geq& \ara I(\widehat{G}'_1\setminus\{av_1\})- \widehat{n}_1+1 + \ara I(\widehat{G}_2)  -\widehat{n}_2 + h_1\\
&\geq& \ara I(\widehat{G}'_1)+ \ara I(\widehat{G}_2)  -\widehat{n} + h_1\\
&\geq& \ara I(G) - n.
\end{eqnarray*}
\vspace{2mm}
\\ If $h_1 > h_2$, it suffices to apply the above arguments after exchanging the roles of $G_1$ and $G_2$. This completes the proof.
\end{proof}

\section{Final Remarks}

According to Kuratowski's Theorem (see, e.g.,  \cite{Ha}, Theorem 11.13)  the graphs whose cycles are pairwise vertex-disjoint are all planar. Moreover, the number $n$ of cycles of a graph $G$ fulfilling the assumption of Theorem \ref{main} coincides with the so-called \textit{cycle rank} of $G$, which, according to \cite{Ha}, Corollary 4.5 (b), is equal to $e-\vert V(G)\vert +k$, where $e$ is the number of edges and $k$ is the number of connected components.
\newline\vskip.1truecm\noindent
Theorem \ref{main} implies that, whenever $G$ is acyclic, $\ara I(G)\leq\bight I(G)$. In view of \eqref{0}, it follows that, in this case, $\ara I(G)=\pd R/I(G)=\bight I(G)$. This result was conjectured in \cite{B}, where it was proven for a special class of acyclic graphs. The general case was settled by Kimura and Terai in \cite{KT13}. Equality can also hold for a graph containing an arbitrary number of cycles: an infinite class of such examples is provided by the fully whiskered graphs on graphs with pairwise disjoint cycles. Therefore the inequality given in Theorem \ref{main} is strict in general. On the other hand, the bound given there is sharp, as is shown by the following example, which is taken from \cite{M13c}.
\\ Consider the graphs $G_1$ and $G_2$ depicted in Figure \ref{F.example}.

\begin{figure}[ht!]\centering
%\begin{subfigure}[b]{\textwidth}\centering
\psset{unit=1.4cm}
\begin{pspicture}(-0.8,-0.6)(5,1.5)
\psline(0,0)(1,0)
\psline(1,0)(1,1)
\psline(1,1)(0,1)
\psline(0,1)(0,0)
\psline(3,1)(3,0)
\psline(1,1)(4,1)
\psdots(0,0)
\psdots[dotsize=3pt 3,dotstyle=Bo](1,0)
\psdots(1,1)
\psdots[dotsize=3pt 3,dotstyle=Bo](0,1)
\psdots[dotsize=3pt 3,dotstyle=Bo](2,1)
\psdots(3,1)
\psdots[dotsize=3pt 3,dotstyle=Bo](3,0)
\psdots[dotsize=3pt 3,dotstyle=Bo](4,1)
%\psdots(5,1)

\rput(-0.7,0.5){$G_1$}
\uput[90](1,1){$x^{(1)}_1$}
\uput[90](0,1){$x^{(1)}_2$}
\uput[270](0,0){$x^{(1)}_3$}
\uput[270](1,0){$x^{(1)}_4$}
\uput[90](2,1){$y^{(1)}_1$}
\uput[90](3,1){$y^{(1)}_2$}
\uput[90](4,1){$y^{(1)}_3$}
\uput[270](3,0){$y^{(1)}_4$}
%\uput[90](5,1){$y^{(1)}_5$}
\end{pspicture}
%\end{subfigure}\\[4mm]
%\begin{subfigure}[b]{\textwidth}\centering
\psset{unit=1.1cm}
\begin{pspicture}(-0.5,-0.5)(10,1.6)
\psline(0,0)(1,0)
\psline(1,0)(1,1)
\psline(1,1)(0,1)
\psline(0,1)(0,0)
\psline(6,1)(5,1)
\psline(5,1)(5,0)
\psline(5,0)(6,0)
\psline(6,0)(6,1)
\psline(3,1)(3,0)
\psline(8,1)(8,0)
\psline(6,0)(9,0)
\psline(1,1)(5,1)
\psdots(0,0)
\psdots[dotsize=3pt 3,dotstyle=Bo](1,0)
\psdots(1,1)
\psdots[dotsize=3pt 3,dotstyle=Bo](0,1)
\psdots[dotsize=3pt 3,dotstyle=Bo](2,1)
\psdots(3,1)
\psdots[dotsize=3pt 3,dotstyle=Bo](3,0)
\psdots[dotsize=3pt 3,dotstyle=Bo](4,1)
\psdots(5,1)
\psdots[dotsize=3pt 3,dotstyle=Bo](6,1)
\psdots[dotsize=3pt 3,dotstyle=Bo](5,0)
\psdots(6,0)
\psdots[dotsize=3pt 3,dotstyle=Bo](7,0)
\psdots(8,0)
\psdots[dotsize=3pt 3,dotstyle=Bo](9,0)
\psdots[dotsize=3pt 3,dotstyle=Bo](8,1)

\rput(-0.7,0.5){$G_2$}
\uput[90](1,1){$x^{(1)}_1$}
\uput[90](0,1){$x^{(1)}_2$}
\uput[270](0,0){$x^{(1)}_3$}
\uput[270](1,0){$x^{(1)}_4$}
\uput[90](2,1){$y^{(1)}_1$}
\uput[90](3,1){$y^{(1)}_2$}
\uput[90](4,1){$y^{(1)}_3$}
\uput[270](3,0){$y^{(1)}_4$}

\uput[270](6,0){$x^{(2)}_1$}
\uput[270](5,0){$x^{(2)}_2$}
\uput[90](5,1){$x^{(2)}_3$}
\uput[90](6,1){$x^{(2)}_4$}
\uput[270](7,0){$y^{(2)}_1$}
\uput[270](8,0){$y^{(2)}_2$}
\uput[270](9,0){$y^{(2)}_3$}
\uput[90](8,1){$y^{(2)}_4$}
%\rput(9.5,0){$\dots$}
\end{pspicture}
%\end{subfigure}
\caption{}\label{F.example}
\end{figure}
\noindent {\small\textit{In Figure $\ref{F.example}$ the empty dots form maximum minimal vertex covers of $G_1$ and $G_2$.}}
\\\\
 We have $\bight I(G_1) = 5$ and $\bight I(G_2) = 10$. The projective dimensions of the corresponding quotient rings (in characteristic zero) can be computed by the software packages \texttt{CoCoA} \cite{CoCoA} or \texttt{Macaulay2} \cite{Mac2} and provide a lower bound for the arithmetical rank, namely $6\leq\ara I(G_1)$, and $12\leq\ara I(G_2)$. On the other hand, we also have the opposite inequalities. In fact, the polynomials\\

\begin{tabular}{p{6.3cm}p{6.3cm}}
$\begin{aligned}
q^{(1)}_0 &= x^{(1)}_1 x^{(1)}_2, \\
q^{(1)}_1 &= x^{(1)}_1 x^{(1)}_4 + x^{(1)}_2 x^{(1)}_3, \\
q^{(1)}_2 &= x^{(1)}_1 y^{(1)}_1 + x^{(1)}_3 x^{(1)}_4,
\end{aligned}$
&
$\begin{aligned}
q^{(1)}_3 &= y^{(1)}_2 y^{(1)}_3, \\
q^{(1)}_4 &= y^{(1)}_1 y^{(1)}_2, \\
q^{(1)}_5 &= y^{(1)}_2 y^{(1)}_4
\end{aligned}$
\end{tabular}
\vspace*{1mm}\\ \noindent generate an ideal whose radical is $I(G_1)$,  and the polynomials\\[2mm]
\begin{tabular}{p{6.3cm}p{6.3cm}}
$\begin{aligned}
q^{(1)}_0 &= x^{(1)}_1 x^{(1)}_2, \\
q^{(1)}_1 &= x^{(1)}_1 x^{(1)}_4 + x^{(1)}_2 x^{(1)}_3, \\
q^{(1)}_2 &= x^{(1)}_1 y^{(1)}_1 + x^{(1)}_3 x^{(1)}_4, \\
q^{(1)}_3 &= y^{(1)}_2 y^{(1)}_3, \\
q^{(1)}_4 &= y^{(1)}_1 y^{(1)}_2 + y^{(1)}_3 x^{(2)}_3, \\
q^{(1)}_5 &= y^{(1)}_2 y^{(1)}_4
\end{aligned}$
&
$\begin{aligned}
q^{(2)}_0 &= x^{(2)}_1 x^{(2)}_2, \\
q^{(2)}_1 &= x^{(2)}_1 x^{(2)}_4 + x^{(2)}_2 x^{(2)}_3, \\
q^{(2)}_2 &= x^{(2)}_1 y^{(2)}_1 + x^{(2)}_3 x^{(2)}_4, \\
q^{(2)}_3 &= y^{(2)}_2 y^{(2)}_3, \\
q^{(2)}_4 &= y^{(2)}_1 y^{(2)}_2, \\
q^{(2)}_5 &= y^{(2)}_2 y^{(2)}_4
\end{aligned}$
\end{tabular}
\\[2mm] generate an ideal whose radical is $I(G_2)$.
\\ Hence $6=\ara I(G_1)=\bight I(G_1) + 1$ and $12=\ara I(G_2)=\bight I(G_2) + 2$.

\bibliographystyle{plain}	                  % (uses file "plain.bst")
%\bibliography{Myrefs}		                  % expects file "Myrefs.bib"

\end{document}